\documentclass[11pt]{amsart}
\usepackage{geometry}                % See geometry.pdf to learn the layout options. There are lots.
\geometry{letterpaper}                   % ... or a4paper or a5paper or ...
\usepackage{graphicx}
\usepackage{amssymb}
\usepackage{epstopdf}
\DeclareGraphicsRule{.tif}{png}{.png}{`convert #1 `dirname
#1`/`basename #1 .tif`.png}
\parskip 1.2 ex

\bibliographystyle{plain}

\theoremstyle{plain}
\newtheorem{theo}{Theorem}[section]
\newtheorem{lem}[theo]{Lemma}

\newtheorem{prop}[theo]{Proposition}

\numberwithin{equation}{section}

\theoremstyle{definition}

\newtheorem{remark}[theo]{Remark}

\def\P{\mathfrak P}

\def\D{\Delta}

\def\g{\gamma}

\def\l{\lambda}

\def \R{\mathbb R}

\def \P{\mathbb P}

\def\Q{\mathbb Q}
\def\W{\mathcal W}
\def\wh{\widehat}
\def\wt{\widetilde}
\def\({\biggl(}
\def\){\biggr)}

\def\<{\bold\langle}
\def\>{\bold\rangle}

\def\M{\widetilde {M}}
\def\MM{\widetilde {M}(\infty)}

\def\dd{\widetilde{\delta}}

\begin{document}

\title{Entropy rigidity of symmetric spaces without focal points}
\author{Fran\c cois Ledrappier  and  Lin Shu}
\address{Fran\c cois Ledrappier,  Department of Mathematics, University of Notre Dame, IN 46556-4618, USA}\email{fledrapp@nd.edu}
\address{Lin Shu, LMAM, School
of Mathematical Sciences, Peking University, Beijing 100871,
People's Republic of China} \email{lshu@math.pku.edu.cn}
\keywords{volume entropy, rank one manifolds}
\subjclass[2000]{53C24, 53C20}

%\date{}                                            %Activate to display a given date or no date

\maketitle \centerline{\it{To Werner Ballmann for his 61st
birthday}}
\begin{abstract}
We characterize symmetric spaces without focal points by the
equality case of general inequalities between geometric quantities.
\end{abstract}
\section{Introduction}
%\subsection{}

Let $(M,g)$ be a closed connected Riemannian manifold, and $\pi :
(\M, \wt g) \to (M,g)$ its universal cover endowed with the lifted
Riemannian metric. We denote $p(t,x,y), t \in \R_+, x,y \in \M $ the
heat kernel on $\M$, the fundamental solution of the heat equation $
\frac{\partial u }{\partial t} = \Delta u $ on $\M$, where
$\Delta={\textrm { Div }}\nabla$ is the Laplacian on $\M$. Since we
have a compact quotient, all the following limits exist as $t \to
\infty $ and are independent of $x \in \M$:
\begin{eqnarray*}
\l_0 \;&=&\;  \inf _{f \in C^2_c(\M)} \frac {\int |\nabla f|^2}{\int |f|^2} \; = \; \lim _t - \frac{1}{t} \ln p(t,x,x) \\
\ell \; &=&\;  \lim _t \frac{1}{t} \int d(x,y) p(t,x,y) \ d{\textrm {Vol}}(y) \\
h \; &=&\; \lim_t - \frac{1}{t} \int p(t,x,y) \ln p(t,x,y) \ d{\textrm {Vol}}(y) \\
v\; &=& \; \lim _t \frac{1}{t} \ln {\textrm {Vol}} B_{\M} (x,t) ,
\end{eqnarray*}
where $B_{\M} (x,t) $ is the ball of radius $t$ centered at $x$ in
$\M$ and ${\textrm {Vol}}$ is the Riemannian volume on $\M$.

All these numbers are nonnegative. Recall $\l_0$ is the bottom of
the spectrum of the Laplacian, $\ell $ the linear drift, $h$ the
stochastic entropy and $v$ the volume entropy. There is the
following relation:
\begin{equation}\label{ineq}
4\l_0 \; \stackrel {(a)}{\leq} \; h \;\stackrel {(b)}{ \leq }\; \ell
v \;\stackrel{(c)}{ \leq }v^2.
\end{equation}
See \cite{L1} for (a), \cite{Gu} for (b). Inequality (c) is shown in
\cite{L4} as a corollary of (b) and (\ref{basic}):
\begin{equation}\label{basic}
\ell ^2 \; \leq \; h.
\end{equation}

In this paper,  we are interested in the characterization of locally
symmetric property of $M$ by the  equality case of  the above
inequalities for manifolds without focal points.  Recall that a
Riemannian manifold $M$ is said to have \emph{no focal points} if
for any imbedded open geodesic segment $\gamma: (-a, a)\mapsto M$
(where $0<a\leq \infty$), the restriction of the exponential map  on
the normal bundle of $\gamma$  is everywhere nonsingular.  Hence any
manifold of nonpositive curvature has no focal points.  The reverse
is not true since there exist manifolds without focal points but
with sectional curvatures of both signs (\cite{Gul}).  If $M$ is a
locally symmetric space without focal points,  it must have
nonpositive curvature (\cite[Theorem 3.1]{He}). Note that for
symmetric spaces of nonpositive curvature, $4\lambda_0=v^2$
(\cite{Kar}, cf. \cite[Appendice C]{BeCG91}) and hence all five
numbers $4\l_0, \ell ^2, h, \ell v, v^2$ coincide by (\ref{ineq}),
(\ref{basic}) above and are positive unless $(\M, g )$ is $(\R^n,
{\textrm {Eucl.}})$.  Our result is a partial converse:

\begin{theo}\label{main}Let $(M, g)$ be a compact connected Riemannian manifold without focal points. With  the above notation,  all the following equalities are equivalent to the locally symmetric property of $M$:
\begin{itemize}
\item[i)] $4\lambda_0=  v^2$;
\item[ii)] $h=\ell^2\ \mbox{or}\  v^2$;
\item[iii)] $\ell=v$.
\end{itemize}
\end{theo}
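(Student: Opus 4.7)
The forward direction is immediate from the introduction: for a locally symmetric $M$ without focal points, Karpelevich's identity $4\lambda_0=v^2$ together with (\ref{ineq}) and (\ref{basic}) forces all five quantities $4\lambda_0, \ell^2, h, \ell v, v^2$ to coincide, so each of (i)--(iii) holds. I therefore focus on the three converse implications.

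My plan is to reduce each of (i)--(iii) to the single identity $\ell=v$, and then to prove the rigidity statement that $\ell=v$, under no focal points, implies local symmetry. The reductions use only (\ref{ineq}) and (\ref{basic}). Condition (i) collapses the chain $4\lambda_0\le h\le\ell v\le v^2$ entirely, so $\ell v=v^2$; since $\ell^2\le h=\ell v$ one gets $\ell\le v$, hence $\ell=v$ (unless $v=0$, the Euclidean case). The subcase $h=v^2$ of (ii) likewise forces $\ell v=v^2$, hence $\ell=v$. The remaining subcase $h=\ell^2$ of (ii) is the equality case of (\ref{basic}): writing $h$ and $\ell$ as $L^2$-pairings of $\nabla\log k_\xi$ and $\nabla b_\xi$ against the harmonic measure (as in \cite{L1}), equality in Cauchy--Schwarz forces $\nabla\log k_\xi=\ell\,\nabla b_\xi$, i.e.\ the harmonic measure is a Gibbs measure with Busemann potential of slope $\ell$; in a manifold without focal points, the uniqueness of such Gibbs measures then forces this exponent to match the volume growth exponent $v$, yielding $\ell=v$.

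The main content, and main technical obstacle, is the rigidity statement $\ell=v\Rightarrow$ locally symmetric. The approach is to compare on the ideal boundary $\MM$ the harmonic measure class $\{\nu_x\}$ with the Patterson--Sullivan / visibility measure class $\{\mu_x\}$. Using the boundary formula for $\ell$ as an integral of $\Delta b_\xi$ against $\nu_x$ (cf.\ \cite{L1}, \cite{L4}) and the identification of $v$ as the growth exponent of horoballs, the identity $\ell=v$ forces $d\nu_x/d\nu_y(\xi)=e^{-v(b_\xi(x)-b_\xi(y))}$; equivalently, the harmonic and Busemann cocycles coincide, so every horosphere in $\M$ has constant mean curvature $v$. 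Under no focal points, this constant-mean-curvature condition tightly constrains Jacobi fields along stable and unstable horocycles; combined with a rank decomposition in the spirit of Ballmann--Brin--Eberlein suitably adapted to the no-focal-points setting, it reduces the problem to the rank-one case, where rigidity arguments in the style of Ledrappier and Besson--Courtois--Gallot give local symmetry. The key difficulty is that the classical proofs assume strict negative or nonpositive curvature, whereas the no-focal-points hypothesis permits local regions of positive curvature: strict convexity of the distance function must be replaced by mere convexity along geodesics, and the Jacobi-field estimates along horocycles must be derived purely from the nonsingularity of the normal exponential map.
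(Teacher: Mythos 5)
Your forward direction and the reductions of (i), (iii) and the subcase $h=v^2$ of (ii) are fine, but your overall plan has the reduction running in the wrong direction, and this creates a genuine gap. From the chain $4\lambda_0\le h\le \ell v\le v^2$ and $\ell^2\le h$ one checks that \emph{each} of (i), (iii) and $h=v^2$ implies $h=\ell^2$ (e.g.\ $\ell=v$ gives $v^2=\ell v\ge h\ge \ell^2=v^2$), whereas $h=\ell^2$ only gives $\ell\le v$. So the weakest common condition is $h=\ell^2$, and the correct strategy is to reduce everything to it --- which is what the paper does. Your attempt to go the other way, deducing $\ell=v$ from $h=\ell^2$ via ``equality in Cauchy--Schwarz forces $\nabla\log k_\xi=\ell\nabla b_\xi$, hence the harmonic measure is a Gibbs measure with Busemann potential, hence the exponent equals $v$,'' is unsupported: the first step is indeed in \cite{L4}, but there is no uniqueness-of-Gibbs-measures theorem in the no-focal-points setting that identifies the exponent with $v$, and such a statement is essentially equivalent to the rigidity you are trying to prove. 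As written, the subcase $h=\ell^2$ of (ii) is not handled.

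The rigidity step is also essentially a list of the difficulties rather than a proof. The ``rank decomposition in the spirit of Ballmann--Brin--Eberlein adapted to no focal points'' is precisely Watkins' rank rigidity theorem \cite{Wa}; it holds unconditionally and should be applied \emph{first}, after which the additivity $\ell^2=\sum_i\ell_i^2$, $h=\sum_i h_i$ reduces the problem to rank-one factors with $\ell_i^2=h_i$. For those factors the paper does not compare harmonic and Patterson--Sullivan measures; it shows that $\ell^2=h$ forces $\Delta b_\xi=\ell$ for $\widehat{\bf m}$-a.e.\ $\xi$ (from \cite{L4}), and then upgrades this almost-everywhere identity to asymptotic harmonicity using the uniqueness and \emph{full support} of the harmonic measure of the stable foliation (Theorem \ref{har.mea}, proved via Lyons--Sullivan discretization, non-amenability of $\Gamma$, the Dirichlet problem at infinity, and divergence of geodesics) together with continuity of $\xi\mapsto\Delta b_\xi$. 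The rank-one endgame is Zimmer's theorem \cite{Zi1} (asymptotically harmonic without focal points implies flat or Anosov, then Foulon--Labourie, Benoist--Foulon--Labourie, Freire--Ma\~n\'e and Besson--Courtois--Gallot give local symmetry). You correctly flag that the classical negative-curvature arguments do not apply, but you do not supply the substitutes; without Watkins and Zimmer (or equivalent results) the argument does not close.
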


As recalled in \cite{L4}, Theorem \ref{main}  is known in negative
curvature case and follows from \cite{K1}, \cite{BFL}, \cite{BCG95},
\cite{FL}  and  \cite{L1}.  The other possible converses are
delicate: even for  negatively  curved manifolds, in dimension
greater than two, it is not known whether $h=\ell v$ holds only for
locally symmetric spaces. This is equivalent to a conjecture of
Sullivan (see \cite{L3} for a discussion). Sullivan's conjecture
holds for surfaces of negative curvature (\cite {L1}, \cite {Ka}).
In negative curvature case,   $4\lambda_0=h$ (and hence $4\lambda_0=
\ell v$) implies $M$ is locally symmetric, which follows from
\cite{L2},  \cite{BFL}, \cite{BCG95},  and \cite{FL}. In the no
focal points case, whether $4\lambda_0=h$ holds only for locally
symmetric spaces may depend on a further study on the Martin
boundary of $\M$ (cf. \cite{AS} for this notion). However, it would
follow from the hypothetical $4\lambda_0\leq \ell^2$ by ii) of
Theorem \ref{main}.

We assume henceforth that $(M,g)$ has no focal points.  Given a
geodesic $\g $ in $M$, Jacobi fields along $\g$ are vector fields $
t \mapsto J(t) \in T_{\g(t)}M$ which describe infinitesimal
variation of geodesics around $\g$. The rank of the geodesic $\g$ is
the dimension of the space of Jacobi fields such that $t \mapsto
\|J(t)\| $ is a constant function on $\R$. The rank of a geodesic
$\g$ is at least one because of the trivial $t \mapsto \dot {\g}(t)$
which describes the variation by sliding the geodesic along itself.
The rank of the manifold $M$ is the smallest rank of geodesics in
$M$. Using  rank rigidity theorem for manifolds with no focal points
from \cite{Wa}, we reduce in section 2 the proof of Theorem
\ref{main} to proving that if $(M,g)$ is rank one, the equality
$\ell^2=h$  implies that $(\M, \wt g) $ is a symmetric space.  For
this, we show in section 4 that the equality $\ell^2=h$  implies
that $(\M, \wt g) $ is asymptotically harmonic (see section
\ref{Proof} for its definition). This uses the solvability of the
Dirichlet problem at infinity and the structure of harmonic measures
of the stable foliation, which will be presented in section 3.
Finally, it was recently observed by A. Zimmer (\cite{Zi1}) that
asymptotically harmonic universal covers of rank one manifolds are
indeed symmetric spaces.

\section{Generalities and reduction of Theorem \ref{main}}

We assume in the following that $(M, g)$ is a compact connected
Riemannian manifold without focal points.  Let $\wt{M}$ be the
universal cover of $M$ with covering group $\Gamma=\pi_1(M)$.

\subsection{Geometric boundary}%\label{Geo.boundary}
The notion of geometric boundary was introduced by Eberlein and
O'Neill \cite{EO} for nonpositive curvature case (see \cite{B3}).
Consider the geodesics on $\wt{M}$.  Two geodesic rays $\gamma_1$
and $\gamma_2$ of $\wt{M}$ are said to be asymptotic (or
equivalent) if $\sup_{t\geq 0}d(\gamma_1(t), \gamma_2(t))<+\infty$.
The set of equivalence classes $[\gamma]$, denoted by
$\wt{M}(\infty)$, is called the geometric boundary of $\wt{M}$.  We
denote by $\wh{M}$ the union $\M\cup \M(\infty)$.

Given $x\in \M$, for any geodesic ray $\gamma$ of $\M$, there exists
a unique geodesic  starting  at $x$ which is asymptotic to $\gamma$
(\cite[Proposition 3]{O}).  Hence for any $(x, \xi)\in \M\times
\M(\infty)$, there is a unique unit speed geodesic $\gamma_{x, \xi}$
satisfying $\gamma_{x, \xi}(0)=x$ and $[\gamma_{x, \xi}]=\xi$.
Denote by $S_x\M$  the unit tangent space at $x$ to $\M$. The
mapping   $\pi_x ^{-1}:\  \MM\mapsto S_x\M$ sending  $\xi$ to
$\dot\g_{x, \xi}(0)$ is a bijection between these two sets.

For  $v, w\in S_p \M$, $p\in \M$,  the angle $\angle_p (v, w)$ is
the unique number $0\leq \theta\leq \pi$ such that  $\langle v,
w\rangle=\cos  \theta$. For $q\in \wh{M}$ other than $p$, let
$\g_{p, q}$ denote the unique unit speed geodesic starting at $p$
pointing at $q$.   Given  $v\in S_p \M$ and  $0<\epsilon<\pi$.  The
set
\[C(v, \epsilon):=\{q\in \wh{M}:\ \ \angle_p (v, \dot\g_{p, q}(0))<\epsilon\}\]
 is called  the cone of vertex $p$, axis $v$, and angle $\epsilon$ (cf. \cite{EO}).   It was shown in \cite{Go} that there exists a  canonical topology  on  $\wh{M}$  so that  for any $x\in \M$,  the mapping  $\pi_x $ is a homeomorphism   between  $S_x\M$ and  $\MM$.   The topology is called  the ``cone''  topology  in the sense for $\xi\in \M (\infty)$,
 the truncated cones
 \[C(v,  \epsilon, r)=C(v, \epsilon)\cap \left(\widehat{M}\backslash B(p, r)\right)\]
 containing  $\xi$,  where $B(p, r)$ is the closed ball of radius $r$ about $p$, form a local basis at $\xi$.

 We will identify $S\M$ with $\M \times \MM$ by $(x,v) \mapsto (x, \pi _x v)$.  The action of  $\Gamma$ on $M$ can be continuously extended to $\wt{M}(\infty)$.  Hence the quotient $SM$ is identified with the quotient of $\M \times \MM$ under the diagonal action of $\Gamma$.

\

\subsection{Stable Jacobi tensor}%\label{stable tensor}

Let $\g$ be a geodesic in $(\M,\wt{g})$ and let $N(\g) $ be the
normal bundle of $\g$: $$ N(\g ) :=\cup _{t \in \R}N_t (\g),
{\textrm { where }} N_t(\g) \; = \; ( \dot \g (t))^\perp \; = \; \{
X \in T_{\g(t)} M:\ \ \< X, \dot \g (t)\> = 0 \}.$$ A (1,1) tensor
along $\g$ is a family $V = \{V(t), t\in \R\}$, where $V(t) $ is an
endomorphism of $N_t(\gamma)$ such that for any family $Y_t$ of
parallel vectors along $\g$, the covariant derivative $V'(t) Y_t :=
\frac {D}{dt} V(t) Y_t $ exists.

We endow $N(\g)$ with Fermi orthonormal coordinates  given by a
parallel frame field along $\g $. A (1,1) tensor along $\g$ is
parallel if $V' (t) = 0 $ for all $t$. It is then given by a
constant matrix  in Fermi coordinates. The curvature tensor $R$
induces a symmetric (1,1) tensor along $\g$ by  $R(t)X = R(X, \dot
\g (t)) \dot\g (t).$ A (1,1) tensor $V(t)$ along $\g$ is called a
\emph{Jacobi tensor} if it satisfies $ V'' + R V = 0 .$ If $V(t)$ is
a Jacobi tensor along $\g$, then $J(t) := V(t) Y_t$ is a Jacobi
field for any parallel field $Y_t$.

For each ${\bf{v}}=(x, v)\in S\M$, let $\gamma_{\bf{v}}$ denote the
unique geodesic starting from $x$ with speed $v$. A Jacobi tensor
$V_{{\bf v}}$ defined for each $\gamma_{\bf v}$ is called continuous
if the initial values $V_{\bf v}(0), V'_{\bf v}(0)$ are continuous
as $(1,1)$ tensors of the vector bundle  ${\bf B}:=\{({\bf v}, {\bf
w})\in S\M\times T\M:\ \ {\bf w}\perp {\bf v}\}$  over $S\M$.

Now for ${\bf{v}}=(x, v)\in S\M$,  denote by $A_{\bf{v}}$ be the
Jacobi tensor along $\gamma_{\bf{v}}$ with initial condition
$A_{\bf{v}}(0)=0$ and $A'_{\bf{v}}(0)=I$ ($I$ is the identity).  For
each $s>0$, let $S_{{\bf{v}}, s}$ be the Jacobi tensor with the
boundary conditions $S_{{\bf{v}}, s}(0)=I$ and $S_{{\bf{v}},
s}(s)=0$.  It can be shown (cf. \cite{EOs}) that the limit
$\lim_{s\to +\infty}S_{{{\bf v}}, s}=: S_{{\bf v}}$ exists and is
given by
\[S_{{\bf v}}(t)=A_{{\bf v}}\int_{t}^{+\infty} (A_{{\bf v}}^{*}A_{{\bf v}})^{-1}(u)\ du,\]
where $A_{{\bf v}}^*$ is the transposed form of $A_{{\bf v}}$. The
tensor $S_{{\bf v}}$ is called the stable tensor along the geodesic
$\g_{\bf v}$. As a consequence of the  uniform convergence of
$S'_{{\bf{v}}, s}(0)$ to $S'_{{\bf v}}(0)$ (\cite[Proposition
5]{E}),  one has by continuity of $S'_{{\bf{v}}, s}(0)$ (with
respect to ${\bf v}$) that the stable tensor $S_{\bf v}$ is
continuous with respect to ${\bf v}$ (\cite[Proposition 4]{EOs}).

For each ${\bf{v}}=(x, v)\in S\M$, the vectors $(Y, S_{\bf v}(0)Y)$
describe variations of asymptotic geodesics and the subspace $E_{\bf
v}^s\subset T_{\bf v}T\M$ they generate corresponds to $TW_{\bf
v}^{s}$, where $W_{\bf v}^{s}$, the set of initial vectors of
geodesics asymptotic to $\g_{\bf v}$, is identified with $\M\times
\pi_x(v)$ in $\M\times \M(\infty)$.  Recall that $SM $ is identified
with the quotient of $\M \times \MM$ under the diagonal action of
$\Gamma$. Clearly, for $\varphi \in \Gamma$, $\varphi(W^s_{\bf v}) =
W^s_{D\varphi {\bf v}}$ so that the collection of $W^s_{\bf v}$
define a foliation $\W^s$ on $SM$, the so-called \emph{stable
foliation} of $SM$. The leaves of the stable foliation $\W^s$ are
quotient of $\M$, they are naturally endowed with the Riemannian
metric induced from $\wt g$.

Similarly, by reversing the time in the  construction of stable
tensor,  one obtains the corresponding unstable tensor and hence the
unstable subspaces and  the unstable foliation.

\

\subsection{Busemann functions}\label{sec-Busemann}
Fix $x_0\in \M$ as a reference point.  For each $\xi\in \M(\infty)$,
define a Busemann function at $\xi$ (cf. \cite{E}) as follows. Let
$\gamma_{x_0, \xi}$ be the unique unit speed geodesic starting at
$x_0$ which is asymptotic to $\xi$.  For each $s\geq 0$, define the
function
\[
b_{\xi, s}(x):=d(x, \gamma_{x_0, \xi}(s))-s, \ \forall  x\in \M.
\]
We have by triangle inequality  that  $b_{\xi, s}(x)$ are decreasing
with $s$ and  bounded absolutely from below by $-d(x_0, x)$. So the
function
\[
b_{\xi} (x):=\lim\limits_{s\rightarrow \infty} b_{\xi, s}(x), \
\forall  x\in \M,
\]
is well defined and is called the Busemann function at $\xi$. It was
shown in \cite{E} that the function $x\mapsto b_{\xi}(x)$ is of
class $C^2$.

For each $\varphi\in \Gamma$, $(x, \xi)\in \M\times \M(\infty)$, we
consider $b_{\varphi\xi}(\varphi x)$. The geodesics $\gamma_{x_0,
\varphi \xi}$ and $\g_{\varphi x_0, \varphi\xi}$ are asymptotic.
Hence the Busemann functions at $\varphi \xi$ using different
reference points $x_0$ and $\varphi x_0$ only differ by a constant
depending on $x_0, \varphi, \xi$ (\cite[Proposition 3]{E}), which is
in fact  given by $b_{ \xi}(\varphi x_0)$.  So we have
\begin{eqnarray*}
b_{\varphi\xi}(\varphi x)&:=& \lim\limits_{s\to +\infty}\left(d(\varphi x, \g_{x_0,  \varphi \xi}(s))-s\right)\\
&=&   \lim\limits_{s\to +\infty}\left(d(\varphi x, \g_{\varphi x_0,  \varphi \xi}(s))-s\right)+b_{\xi}(\varphi x_0)\\
&=& \lim\limits_{s\to +\infty} (d(x, \gamma_{x_0, \xi}(s))-s)+b_{\xi}(\varphi x_0)\\
&=& b_{\xi}(x)+b_{\xi}(\varphi x_0).
\end{eqnarray*}
It follows that the function $\Delta_x b_{\xi}$ satisfies
$\Delta_{\varphi x} b_{\varphi \xi}=\Delta_{x} b_{\xi}$ and
therefore defines a function $B$ on  $\Gamma\backslash (\M\times
\M(\infty))=SM$, which is called the Laplacian of the Busemann
function.

For ${\bf v}=(x, v)\in S\M$, let $\xi=[\gamma_{{\bf v}}]$ and let
$b_{\bf v}:=b_{x, \xi}$, where $b_{x, \xi}$ is the Busemann function
at $\xi$ using $x$ as a reference point.  It is true (\cite{E}) that
\begin{equation*}%\label{Bus-stable}
\nabla_{w}(\nabla b_{\bf v})=-S'_{\bf v}(0) (w).
\end{equation*}
Since the stable tensor $S_{\bf v}$ is continuous with respect to
${\bf v}$,  we have $\Delta_x b_{\bf v}=-\mbox{Tr}S'_{\bf v}(0)$
also depends continuously on $v$.  Note that $\Delta_x b_{\bf
v}=\Delta_x b_{\xi}$,  we have $\Delta_x b_{\xi}$ depends
continuously on $\xi$.  Consequently the function $B$ is continuous
on $SM$.

\

\subsection{Proof of Theorem \ref{main}}\label{Proof}
We continue assuming that $(\M,\wt g)$ has  no focal points. By the
Rank Rigidity Theorem (see \cite {Wa}), $(\M ,\wt g) $ is of the
form $$( \M_0 \times \M_1 \times \cdots \times \M_j \times \M_{j+1}
\times \cdots \times \M_k , \wt g)\footnote{With a clear convention
for the cases  when Dim $\M_0 =0,$  $j =0 $ or   $k=j$.} ,$$ where
$\wt g$ is the product metric $\wt g^2 = (\wt g_0)^2 + (\wt g_1)^2 +
\cdots +  (\wt g_j)^2 +  (\wt g_{j+1})^2 +\cdots +  (\wt g_k)^2 $,
$(\M_0 , \wt g_0) $ is Euclidean, $(\M_i, \wt g_i) $ is an
irreducible symmetric space of rank at least two for $i = 1, \cdots,
j$ and a rank one manifold  for $i = j+1, \cdots , k.$ If the
$(\M_i, \wt g_i), i = j+1, \cdots, k, $ are all symmetric spaces of
rank one, then $(\M, \wt g)$ is a symmetric space. Moreover in that
case, all inequalities in (\ref {ineq}) are equalities: this is the
case for irreducible symmetric spaces (all numbers are 0 for
Euclidean space;  for the other spaces, note that locally a
symmetric space without focal points must have nonpositive curvature
(\cite[Theorem 3.1]{He}) and $4\l_0 $ and $v^2$ are classically
known to coincide for locally symmetric space with nonpositive
curvature (cf. \cite[Appendice C]{BeCG91})) and we have:
$$ 4\l_0 (\M) \; = \; \sum _i 4 \l_0 (\M_i ), \quad v^2 (\M) \; = \; \sum _i v^2 (\M_i).$$
To prove Theorem \ref{main}, it suffices to prove that if $\ell ^2 =
h $ (or  $4\lambda_0=h$), all $\M_i $ in the decomposition are
symmetric spaces. This is already true for $i = 0, 1, \cdots,  j.$
It remains to show that $(\M_i, \wt g_i )$ are symmetric spaces for
$i = j+1, \cdots,  k$. Note that each  one of the spaces $(\M_i, \wt
g_i) $ admits a cocompact discrete group of isometries (see
\cite[Theorem 3.3]{Kn2} using the corresponding theorems from
\cite{Wa}).   This shows that  the linear drifts $\ell _i $ and the
stochastic entropies $h_i $ exist for each one of the spaces $(\M_i,
\wt g_i) $. Moreover, we clearly have
$$ \ell ^2 \; = \; \sum _i \ell _i^2, \quad h\; = \; \sum h_i .$$
Therefore Theorem \ref{main} follows from
\begin{theo} \label{rankone-thm} Assume $(M,g)$ is a closed connected  rank one manifold without focal points and that $\ell ^2 = h$. Then $(\M, \wt g)$ is a symmetric space. \end{theo}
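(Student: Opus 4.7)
The plan follows the program announced at the end of the introduction: show that the hypothesis $\ell^2 = h$ forces $(\M, \wt g)$ to be \emph{asymptotically harmonic} in the sense that $\Delta_x b_\xi(x)$ is a constant, independent of $(x,\xi)\in \M\times\MM$, and then invoke A.\ Zimmer's theorem \cite{Zi1} asserting that an asymptotically harmonic universal cover of a closed rank one manifold without focal points is a symmetric space.

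For the reduction I would use the probabilistic framework built in section 3. From the solvability of the Dirichlet problem at infinity, Brownian motion $(X_t)$ on $\M$ converges a.s.\ to a random limit $\xi_\infty \in \MM$, distributed as the harmonic measure $\nu_{x_0}$. Let $k(x, y, \xi) = d\nu_y/d\nu_x(\xi)$ be the Poisson--Martin kernel, and let $\mathbf{m}$ be the $\Gamma$-invariant stationary probability on $SM = \Gamma\backslash(\M \times \MM)$ carrying the process $s\mapsto (X_s,\xi_\infty)$. Applying It\^o's formula to the semimartingales $b_{\xi_\infty}(X_t)$ and $\log k(x_0, X_t, \xi_\infty)$, together with the $\mathbf{m}$-ergodic theorem, one obtains three integral representations
\begin{eqnarray*}
\ell &=& -\int_{SM} B\, d\mathbf{m}, \\
h &=& \int_{SM} \|\nabla_x \log k(x_0, x, \xi)\|^2\, d\mathbf{m}, \\
\ell &=& -\int_{SM} \bigl\langle \nabla_x b_\xi(x),\ \nabla_x \log k(x_0, x, \xi)\bigr\rangle\, d\mathbf{m},
\end{eqnarray*}
where $B$ is the Laplacian of the Busemann function, continuous on $SM$ by section \ref{sec-Busemann}. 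Since $\|\nabla b_\xi\| \equiv 1$, Cauchy--Schwarz applied to the third identity recovers the inequality $\ell^2 \le h$ of (\ref{basic}).

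Under the equality hypothesis $\ell^2 = h$, the Cauchy--Schwarz equality case forces the proportionality $\nabla_x \log k(x_0, x, \xi) = c\,\nabla_x b_\xi(x)$ for $\mathbf{m}$-a.e.\ $(x,\xi)$, for a constant $c$ fixed by the normalization. Both sides are continuous in $(x,\xi)$, and the support of $\mathbf{m}$ projects densely in $SM$ (a consequence of topological transitivity of the geodesic flow on a closed rank one manifold without focal points), so the identity extends pointwise to all of $\M\times \MM$. Taking $x$-divergence and using harmonicity of $k(x_0,\cdot,\xi)$ (whence $\Delta_x \log k = -\|\nabla_x \log k\|^2$), we conclude
\[
c\,\Delta_x b_\xi(x)\;=\;-c^2,\qquad\text{i.e.,}\qquad \Delta_x b_\xi \equiv -c\ \text{on}\ \M\times\MM.
\]
This is exactly asymptotic harmonicity, and an appeal to \cite{Zi1} closes the proof.

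The main technical obstacle will be the rigorous derivation of the integral formulas---especially the cross identity expressing $\ell$ against the pairing $\langle \nabla b_\xi, \nabla \log k\rangle$---in the no-focal-points setting. This requires the absolute continuity of the leafwise harmonic measures of the stable foliation $\W^s$ with respect to the Lebesgue class on leaves, the regularity of the Martin kernel along stable leaves, and a careful treatment of the Doob $h$-transformed Brownian motion conditioned to exit at $\xi$; these are precisely the ingredients that section 3 is designed to supply. Passing from $\mathbf{m}$-a.e.\ proportionality to a pointwise identity likewise uses the rank one hypothesis in an essential way via the transitivity of the geodesic flow on $SM$.
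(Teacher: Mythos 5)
Your overall strategy is the paper's: reduce to showing that $\ell^2=h$ forces asymptotic harmonicity and then quote Zimmer (Proposition \ref{final.Zi}), with the equality case of $\ell^2\le h$ analyzed through the integral formulas for $\ell$ and $h$ against a harmonic measure of the stable foliation. That Cauchy--Schwarz analysis is exactly the content of \cite{L4}, which the paper imports as Proposition \ref{har.busemann} (together with \cite[Proposition 4.2]{Zi2} to transfer the statement from the Busemann compactification to $\MM$). However, two of your steps do not go through as written in the no-focal-points setting.

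First, the pointwise extension. You pass from the $\mathbf m$-a.e.\ identity $\nabla_x\log k(x_0,x,\xi)=c\,\nabla_x b_\xi(x)$ to a pointwise identity on $\M\times\MM$ by asserting that ``both sides are continuous in $(x,\xi)$.'' Continuity of $\xi\mapsto\nabla_x\log k(x_0,x,\xi)$ is a regularity statement about the Martin kernel, and this is precisely what is \emph{not} available without focal points --- the introduction of the paper explicitly singles out the Martin boundary of $\M$ as not understood here (it is Anderson--Schoen territory in negative curvature). The correct order of operations, and the one the paper uses, is: for $\widehat{\bf m}$-a.e.\ fixed $\xi$ the function $k_\xi$ is harmonic and smooth on the leaf, so one takes the leafwise divergence first and obtains $\Delta_x b_\xi\equiv\ell$ for $\widehat{\bf m}$-a.e.\ $\xi$; one then extends in $\xi$ using only the continuity of $(x,\xi)\mapsto\Delta_x b_\xi$, which \emph{is} established via the stable Jacobi tensor (subsection \ref{sec-Busemann}), together with full support of $\widehat{\bf m}$. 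No regularity of $k$ transverse to the leaves is ever needed.

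Second, full support. You justify the density of the support of $\mathbf m$ by ``topological transitivity of the geodesic flow on $SM$.'' The support of a harmonic measure of $\W^s$ is a closed set saturated by stable leaves, i.e.\ it corresponds to a closed $\Gamma$-invariant subset of $\MM$; transitivity of the geodesic flow does not by itself force this set to be all of $\MM$. This is the content of Theorem \ref{har.mea}, whose proof occupies all of Section 3 (discretization of Brownian motion, non-amenability of $\Gamma$, solvability of the Dirichlet problem at infinity, identification of the leaf measures with the Brownian hitting measures); an alternative shorter route uses the density of hyperbolic points and Lemma \ref{hy.neigh} as in Remark 4.8. You do acknowledge that Section 3 supplies technical ingredients, but the specific reason you give for full support is not a proof, and this step cannot be waved through. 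With these two repairs your argument coincides with the paper's.
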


  Let  $(M, g)$ be a closed connected Riemannian manifold without focal points as before.  Its universal cover $(\M, \wt g)$  is said to be  \emph{asymptotically harmonic}  if $B$, the Laplacian of the Busemann function,   is constant on $SM$.   In that case, we  have by \cite[Theorem 1.2]{Zi1} that $(M, g)$ is either flat or the geodesic flow on $SM$ is Anosov.  The latter case, as was observed by Knieper  \cite[Theorem 3.6]{Kn}, actually implies $M$ is a rank one locally symmetric space. (Indeed, let  $M$  be as above with $\M$ being asymptotically harmonic.  If the geodesic flow on $SM$ is Anosov,  then  it is true by P. Foulon and F. Labourie \cite{FL}  that the stable and unstable distribution $E^s$ and $E^u$ of the geodesic flow are $C^{\infty}$. Hence  the result of Y. Benoist, P. Foulon and F. Labourie \cite{BFL} applies and gives that the geodesic flow of $(M, g)$ is smoothly conjugate to the geodesic flow of a locally symmetric space $(M_0, g_0)$ of negative curvature.  Note that in the no focal points case, the volume entropy and the topological entropy of the geodesic flow coincide \cite{FM}.  Thus one can use G. Besson, G. Courtois and S. Gallot's rigidity theorem (\cite{BCG95})  to conclude that the two spaces  $(M, g)$ and $(M_0, g_0)$  are isometric.)   In summary, we have

\begin{prop}\label{final.Zi}(\cite[Theorem 1.1]{Zi1}) Assume $(M, g)$ is a closed connected rank one manifold without focal points such that $(\M, \wt{g})$ is asymptotically harmonic. Then $(\M, \wt{g})$ is a symmetric space.
\end{prop}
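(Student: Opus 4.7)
The plan is to combine Zimmer's dichotomy with the smooth rigidity machinery for Anosov geodesic flows. First I would apply \cite[Theorem 1.2]{Zi1}: a closed manifold without focal points whose universal cover is asymptotically harmonic is either flat or has Anosov geodesic flow. The rank one hypothesis rules out the flat case (a flat $n$-manifold with $n\geq 2$ has rank $n$, since the Jacobi equation $J''=0$ forces fields $J(t)=At+B$ of constant norm to satisfy $A=0$, giving an $n$-dimensional space of parallel solutions), so the geodesic flow on $SM$ must be Anosov.

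With the flow Anosov, the next step is to upgrade the regularity of the invariant splitting. The stable and unstable distributions $E^s,E^u\subset TSM$ are defined via the stable and unstable Jacobi tensors and are a priori only continuous. Because asymptotic harmonicity forces the Busemann Laplacian $B$ to be constant on $SM$, the horospherical foliations have a strongly regular geometry, and the theorem of Foulon--Labourie \cite{FL} then promotes $E^s$ and $E^u$ to class $C^\infty$. At this point the hypotheses of Benoist--Foulon--Labourie \cite{BFL} are satisfied, and their classification theorem yields that the geodesic flow of $(M,g)$ is smoothly conjugate to the geodesic flow of a closed negatively curved locally symmetric space $(M_0,g_0)$.

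It remains to upgrade this dynamical equivalence to an isometry. The smooth conjugacy forces the topological entropies of the two geodesic flows to coincide; by Fathi--Maurel \cite{FM}, in the absence of focal points the topological entropy of the geodesic flow equals the volume entropy, so $v(M,g)=v(M_0,g_0)$. The minimal entropy rigidity theorem of Besson--Courtois--Gallot \cite{BCG95} then identifies $(M,g)$ with $(M_0,g_0)$ up to isometry within the homotopy class of $M_0$, whence $(\wt M,\wt g)$ coincides with $(\wt M_0,\wt g_0)$ and is therefore a symmetric space.

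The conceptual bottleneck of the whole argument is Zimmer's step, which converts the purely geometric hypothesis of asymptotic harmonicity together with the absence of focal points into the dynamical statement that the geodesic flow is Anosov. Once this is in place, the remainder is an essentially linear chain of applications of the four classical rigidity results \cite{FL}, \cite{BFL}, \cite{FM}, \cite{BCG95}; no new geometric or dynamical input is required.
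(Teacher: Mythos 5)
Your argument is correct and follows essentially the same route as the paper: Zimmer's dichotomy (flat versus Anosov), then Foulon--Labourie for smoothness of $E^s,E^u$, Benoist--Foulon--Labourie for the smooth conjugacy to a negatively curved locally symmetric model, the coincidence of topological and volume entropy, and finally Besson--Courtois--Gallot to get the isometry. One cosmetic slip: the reference \cite{FM} is Freire--Ma\~n\'e (on manifolds without conjugate points), not ``Fathi--Maurel.''
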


Therefore, Theorem \ref{rankone-thm} directly follows from
Proposition \ref{final.Zi} and

\begin{prop}\label{as.har}Assume $(M, g)$ is a closed connected rank one manifold without focal points and that $\ell^2=h$. Then $(\M, \wt{g})$ is asymptotically harmonic.
\end{prop}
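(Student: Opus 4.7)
The strategy is to revisit the proof of the inequality $\ell^2\le h$ recorded in (\ref{basic}), expressing both $\ell$ and $h$ as integrals against a common harmonic probability measure $\bar m$ on $SM$, and to extract from the equality case the condition that $B$ is constant on $SM$. The analytic inputs come from Section 3: solvability of the Dirichlet problem at infinity for $\Delta$ on $\M$; the resulting Poisson kernel $k_\xi(x) = d\nu_x/d\nu_{x_0}(\xi)$ for the hitting distributions $\nu_x$ on $\M(\infty)$ of Brownian motion $(\omega_t)$ with generator $\Delta$; the $\Gamma$-invariant harmonic probability measure $\bar m$ on $SM$ obtained as the $\Gamma$-quotient of $k_\xi(x)\,d\mathrm{Vol}(x)\,d\nu_{x_0}(\xi)$ on $\M\times \M(\infty)$; and the fact that $\bar m$ is absolutely continuous with respect to the Liouville measure and has full support in $SM$ by minimality of the $\Gamma$-action on $\M(\infty)$ in the rank-one case.

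These tools yield the Kaimanovich-type entropy formula
\begin{equation*}
h \;=\; \int_{SM} \bigl|\nabla_x \ln k_\xi(x)\bigr|^2 \, d\bar m(x,\xi),
\end{equation*}
together with the integral representation
\begin{equation*}
\ell \;=\; -\int_{SM} \bigl\langle \nabla_x \ln k_\xi(x),\,\nabla_x b_\xi(x)\bigr\rangle \, d\bar m(x,\xi),
\end{equation*}
the latter obtained by applying It\^o's formula to $b_\xi$ along the Doob-conditioned Brownian motion (conditioned on $\omega_\infty=\xi$) together with the almost sure equality $d(x_0,\omega_t)+b_{\omega_\infty}(\omega_t)=o(t)$. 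Combining these with $|\nabla_x b_\xi|\equiv 1$ yields the ``completing the square'' identity
\begin{equation*}
0 \;\le\; \int_{SM} \bigl|\nabla_x \ln k_\xi + \ell\,\nabla_x b_\xi\bigr|^2\, d\bar m \;=\; h - 2\ell^2 + \ell^2 \;=\; h - \ell^2,
\end{equation*}
which recovers (\ref{basic}).

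Under the hypothesis $\ell^2=h$ (and assuming $\ell>0$, the degenerate case being ruled out in the rank-one setting), the square-integral above vanishes, so $\nabla_x\ln k_\xi(x) = -\ell\,\nabla_x b_\xi(x)$ for $\bar m$-a.e.\ $(x,\xi)$. Joint continuity of both sides in $(x,\xi)$, together with the fullness of $\mathrm{supp}(\bar m)$, upgrades this to an everywhere identity on $\M\times \M(\infty)$. Taking $\Delta_x$ and using the harmonicity of $k_\xi$ in $x$ -- which gives $\Delta_x \ln k_\xi = -|\nabla_x \ln k_\xi|^2 = -\ell^2$ -- we obtain $-\ell\, B = -\ell^2$, so $B \equiv \ell$ on all of $SM$. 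This is precisely the asymptotic harmonicity of $(\M,\wt g)$.

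The main technical hurdle is not the soft ``completing the square'' computation above but its analytic prerequisites, packaged in Section 3: establishing, in the no-focal-points rank-one setting (where one has neither strict horoball convexity nor the Anosov property of the geodesic flow to draw on), both the solvability of the Dirichlet problem at infinity and the regularity, cocycle, and full-support properties of $k_\xi$ and $\bar m$ that legitimize the two integral representations of $\ell$ and $h$ used above. Once those are in hand the proof is essentially formal.
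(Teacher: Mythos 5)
Your overall strategy is the same as the paper's: equality in $\ell^2\le h$ is exploited through a harmonic measure, the a.e.\ identity it produces for the Busemann function is upgraded to an everywhere identity using full support of the measure plus continuity, and the analytic burden is carried by the boundary theory of Section 3. The ``completing the square'' computation you reconstruct is exactly the content of \cite{L4} that the paper invokes as a black box in Proposition \ref{har.busemann} (and the gradient identity $\nabla_x\ln k_\xi=-\ell\,\nabla_x b_\xi$ appears verbatim in the paper's closing Remark). The one genuine difference is that you build the harmonic measure directly on $\M\times\M(\infty)$ from the hitting measures of Brownian motion, whereas \cite{L4} works on the Busemann compactification and the paper then needs Zimmer's identification \cite[Proposition 4.2]{Zi2} of the two boundaries; your route avoids that detour at the price of needing the convergence of Brownian paths to $\M(\infty)$ (Theorem \ref{coin}) already at the construction stage.

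There is, however, one step that does not go through as written. You pass from the $\bar m$-a.e.\ identity $\nabla_x\ln k_\xi=-\ell\,\nabla_x b_\xi$ to an everywhere identity on $\M\times\M(\infty)$ by invoking ``joint continuity of both sides in $(x,\xi)$''. Continuity of $\xi\mapsto\nabla_x\ln k_\xi$ is not available: $k_\xi$ is defined only as a Radon--Nikodym density for $\widehat{\bf m}$-a.e.\ $\xi$, and its regularity in $\xi$ amounts to regularity of the Martin/Poisson kernel, which the paper explicitly flags as an open issue in the no-focal-points setting. The correct order of operations is the one the paper uses: for each fixed $\xi$ in the full-measure set, both sides of the gradient identity are continuous in $x$ (since $k_\xi$ is a positive harmonic function and $b_\xi$ is $C^2$), so the identity holds for all $x$; taking $\Delta_x$ and using $\Delta_x\ln k_\xi=-|\nabla_x\ln k_\xi|^2=-\ell^2$ then gives $\Delta_x b_\xi=\ell$ for all $x$ and $\widehat{\bf m}$-a.e.\ $\xi$; only at this stage does one invoke the continuity of $\xi\mapsto\Delta_x b_\xi$ --- which the paper does establish, via the continuity of the stable Jacobi tensor in Section 2.3 --- together with the full support of $\widehat{\bf m}$ on $\M(\infty)$ (Theorem \ref{har.mea}) to conclude $B\equiv\ell$ on $SM$. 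With that reordering (and your observation that $\ell>0$ because $\Gamma$ is nonamenable, so division by $\ell$ is legitimate) the argument is complete.
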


\

\section{Harmonic measure for  the stable foliation}
We consider the stable  foliation $\W:=\W^s$  of subsection 2.2.
Recall that the leaves are endowed with a natural Riemannian metric.
We write $\D^\W $ for the associated Laplace  operator on functions
which are of class $C^2$ along the leaves of $\W$. A probability
measure $m$ on $SM$ is called harmonic if it satisfies, for any
$C^2$ function $f$,
$$ \int _{SM} \D^\W f  \ dm \; = \; 0 .$$
Our main result in this section is:

\begin{theo}\label{har.mea}
Let $(M, g)$ be a closed connected rank one manifold without focal
points, $\W$ the stable foliation on $SM$ endowed with the natural
metric as above. Then, there is only one harmonic probability
measure $m$ and the support of $m$ is the whole space $SM$.
\end{theo}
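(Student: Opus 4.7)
The approach is to identify harmonic measures for $\W$ with explicit measures on $\M\times\MM$ built from the Poisson kernel and to reduce uniqueness to the Liouville property on $M$. By Garnett's criterion, a probability $m$ on $SM$ is harmonic for $\W$ if and only if it is invariant under the leafwise Brownian motion semigroup. Lifting $m$ to a $\Gamma$-invariant positive Radon measure $\widetilde m$ on $\M\times\MM$ and disintegrating over the transversal $\MM$, this invariance forces
\[
d\widetilde m(x,\xi)=h_\xi(x)\,d{\textrm{Vol}}(x)\,d\nu(\xi),
\]
where $h_\xi$ is a positive harmonic function on $\M$ for $\nu$-a.e.\ $\xi$. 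The rank one no focal points hypothesis makes $\M$ a visibility manifold with solvable Dirichlet problem at infinity (to be recalled in the next section), producing the harmonic measure class $\{\nu_x\}_{x\in\M}$ on $\MM$ with Poisson kernel $k(x_0,x,\xi)=d\nu_x/d\nu_{x_0}(\xi)$ satisfying $k(\varphi y,\varphi x,\varphi\xi)=k(y,x,\xi)$ for $\varphi\in\Gamma$.

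For \emph{existence}, the $\Gamma$-equivariance of the Poisson kernel together with $\varphi_*\nu_{x_0}=\nu_{\varphi x_0}$ directly shows that
\[
d\widetilde m_0(x,\xi):=k(x_0,x,\xi)\,d{\textrm{Vol}}(x)\,d\nu_{x_0}(\xi)
\]
is $\Gamma$-invariant, giving a harmonic probability measure on $SM$ after normalization. For \emph{uniqueness}, I first observe that the basepoint projection $SM\to M$ sends any harmonic $m$ to a probability on $M$ invariant under Brownian motion on $M$, and compactness of $M$ forces this pushforward to be the normalized Riemannian volume. In terms of $\widetilde m$ this yields $\int h_\xi(x)\,d\nu(\xi)=\textrm{const}$ in $x$. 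Expanding each $h_\xi$ via its Martin representation $h_\xi(x)=\int k(x_0,x,\eta)\,d\lambda_\xi(\eta)$ and setting $\Lambda(d\eta,d\xi):=d\lambda_\xi(\eta)\,d\nu(\xi)$, this constraint forces the first marginal of $\Lambda$ to be proportional to $\nu_{x_0}$. The $\Gamma$-invariance of $\widetilde m$ then imposes an equivariance condition on $\Lambda$, and the ergodicity of $\Gamma\curvearrowright(\MM,\nu_{x_0})$---equivalent, via the Poisson integral, to $M$ being Liouville---pins the conditionals of $\Lambda$ to Dirac masses on the diagonal, yielding $\widetilde m\propto\widetilde m_0$.

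For the \emph{support} statement, $\textrm{supp}\,m$ is closed, $\Gamma$-invariant, and saturated by $\W$-leaves; its lift to $\M\times\MM$ therefore has the form $\M\times F$ with $F\subseteq\MM$ closed and $\Gamma$-invariant. Since $\Gamma$ acts cocompactly on a rank one manifold without focal points, the $\Gamma$-action on $\MM$ is minimal and hence $F=\MM$, so $\textrm{supp}\,m=SM$. The main obstacle I expect is the extremality step in the uniqueness argument: showing that the Martin-representing measures $\lambda_\xi$ must concentrate on $\{\xi\}$, which requires careful use of $\Gamma$-equivariance together with the specific dynamical and geometric properties of $\M$ in the no focal points setting that will be developed in the next section.
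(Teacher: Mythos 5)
Your starting point coincides with the paper's: by Garnett's theory the lift $\wt m$ disintegrates as $k_\xi(x)\,d{\textrm{Vol}}(x)\,d{\bf m}(\xi)$ with $k_\xi$ positive harmonic, and the projection of $\wt m$ to $\M$ must be proportional to the Riemannian volume. Your support argument via minimality of the $\Gamma$-action on $\MM$ is also workable (minimality follows from the density of hyperbolic points together with Lemma \ref{hy.neigh}), though the paper instead deduces full support of each $m_x$ from the solvability of the Dirichlet problem for the discretized random walk (Theorem \ref{Diri}).

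The genuine gap is in the uniqueness step. You expand each $h_\xi$ by a Martin representation $h_\xi(x)=\int k(x_0,x,\eta)\,d\lambda_\xi(\eta)$ over $\MM$ with the Poisson kernel as Martin kernel. For a rank one manifold without focal points this is not available: it is not known that the Martin boundary coincides with $\MM$, nor that the kernels $k(x_0,\cdot,\eta)$ are minimal positive harmonic functions (the Anderson--Schoen identification requires pinched negative curvature, and the introduction of the paper explicitly notes that the Martin boundary in the no focal points setting is open; also, $\M$ here is not a visibility manifold, since non-hyperbolic boundary points may exist). Even granting such a representation, the step you yourself flag as the main obstacle --- forcing $\lambda_\xi=\delta_\xi$ --- is precisely the content of the theorem and is not supplied: knowing that the first marginal of $\Lambda$ is $\nu_{x_0}$ and that $\Gamma$ acts ergodically on $(\MM,\nu_{x_0})$ does not by itself exclude a $\Gamma$-equivariant family $\lambda_\xi$ spread over $\MM$. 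The paper circumvents Martin theory entirely: since $x\mapsto \int g\,dm_x$ is a \emph{bounded} harmonic function and $m_{\varphi x_0}=\varphi_* m_{x_0}$, the Lyons--Sullivan discretization shows that $m_{x_0}$ is a stationary measure for the induced random walk on $\Gamma$; uniqueness of that stationary measure (Theorem \ref{sta.mea}, resting on non-amenability of $\Gamma$ and the solvability of the Dirichlet problem for the random walk, which in turn uses the hyperbolic-point lemmas from Watkins) together with the identification of the random-walk and Brownian hitting measures (Theorem \ref{coin}) then determines $m_{x_0}$ uniquely. To repair your argument you would need to replace the Martin representation by this bounded-harmonic-function/stationarity route, or else first establish the Martin boundary identification, which is a substantially harder open problem.
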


A main step in the proof of  Theorem \ref{har.mea} is to identify
the lift of $m$ in $\M\times \M (\infty)$ locally as $dx\times
dm_x$, where $m_x$ is the hitting probability at $\M(\infty)$ of the
Brownian motion on $\M$ starting at $x$ and  $dx$ is proportional to
the Riemannian volume on $\M$.  We adopt  Ballmann's approach
(\cite{B2,B3}) to use Lyons-Sullivan's procedure (\cite{LS}) of
discretizing the Brownian motion (starting at $x$)  to a random walk
on $\Gamma$ and show $m_x$ is indeed the unique stationary measure
of the corresponding random process on $\Gamma$.   The argument
involves random walk on $\Gamma$,   the solvability of the Dirichlet
problem at infinity  and  divergence properties of geodesics for
manifolds without focal points and is  divided into seven  steps for
clarity.

\

\subsection{Discretization of Brownian motion}%\label{Dis.BM}
Fix $x_0\in \M$. The discretization procedure of Lyons and Sullivan
(\cite{LS}) associates to the Brownian motion on $\M$ a probability
measure $\nu$ on $\Gamma$ with $\nu(\varphi)>0$ for all $\varphi\in
\Gamma$ such that  any bounded harmonic function $h$ on $\M$
satisfies
\[
h(x_0)=\sum\limits_{\varphi\in \Gamma}h(\varphi x_0)\nu(\varphi).
\]
Consider the random walk on $\Gamma$ defined by $\nu$ with
transition probability of $\nu(\varphi^{-1}\wt{\varphi})$ from
$\varphi\in \Gamma$ to $\wt{\varphi}\in \Gamma$.  For given
$\varphi_1, \cdots, \varphi_k$, the probability $\Q$ that a sequence
$\{\varphi_n\}$ begins with $\varphi_1, \cdots, \varphi_k$ is
defined to be
\[
\nu(\varphi_1)\nu(\varphi_1^{-1}\varphi_2)\cdots
\nu(\varphi_{k-1}^{-1}\varphi_{k}).
\]
The  random walk on $\Gamma$ generated by $\nu$ is a good
approximation of Brownian motion on $\M$ starting at $x_0$ in the
following sense:

\begin{prop}\label{Lyons-S}(\cite[Theorem 6]{LS}, cf. \cite[Sec. 4]{B2})  Let $W$ be the space of all continuous paths $c:(0, +\infty)\to \M$ and $\Omega$ the space of all sequences of heads and tails. There is a probability measure $\overline{\P}$ on $W\times \Omega$ with the following properties:
\begin{itemize}
\item[i)] The natural projection from $W\times \Omega$ to $W$ maps $\overline{\P}$ to $\P$, the probability measure on $W$ associated to the Brownian motion starting at $x_0$.
\item[ii)]There is a map $W\times \Omega\to \Gamma^{\Bbb N}$, $(c, \omega)\to \{\varphi_n(c, \omega)\}$, which maps $\overline{\P}$ onto the probability measure $\Q$.
\item[iii)] There is an increasing sequence of stopping times $T_n$ on $W\times \Omega$ and a positive constant $\delta<1$ such that
\[
\overline{\P}\left[ \max\limits_{T_n<t<T_{n+1}} d(c_{\omega}(t),
\varphi_n(c, \omega))>k\right]\leq \delta^k, \ \forall   k>0.
\]
\end{itemize}
\end{prop}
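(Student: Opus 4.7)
The plan is to implement the Lyons--Sullivan discretization of Brownian motion, essentially as recast in \cite{B2}. The idea is to introduce an auxiliary sequence of independent Bernoulli trials (encoded in the head/tail factor $\omega$) together with a $\Gamma$-invariant family of balls around the orbit $\Gamma x_0$, and to let the coin flips decide at which excursions of the path through these balls one \emph{registers} the current orbit point as the next $\varphi_n$. The measure $\nu$ on $\Gamma$ is then read off as the law of $\varphi_1$ conditioned on $\varphi_0 = e$.

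First I would fix radii $0 < r < R$ so that the balls $\overline{B}(\varphi x_0, R)$ are pairwise disjoint in $\M$ --- possible because $\Gamma$ acts properly discontinuously and $M$ is compact --- and so that a uniform Harnack-type balayage decomposition holds: for every $x\in B(\varphi x_0, r)$ the harmonic (hitting) measure $\mu_x^R$ on $\partial B(\varphi x_0, R)$ admits a splitting $\mu_x^R = \varepsilon\, \mu_{\varphi x_0}^R + (1-\varepsilon)\,\sigma_x^R$ with some $\varepsilon > 0$ independent of $\varphi$ and $x$. A fair Bernoulli coin then decides which summand governs the exit: on ``heads'' we set $\varphi_{n+1}=\varphi$ and sample the exit point from $\mu_{\varphi x_0}^R$; on ``tails'' we continue unregistered using $\sigma_x^R$. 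The stopping time $T_{n+1}$ is the time of the $(n+1)$-th successful registration. Property (i) is built in, since we simply project onto the $W$-component. Property (ii) follows from the strong Markov property combined with $\Gamma$-equivariance of balls and harmonic measures: the increments $\varphi_n^{-1}\varphi_{n+1}$ are i.i.d.\ with common law $\nu$, and positivity $\nu(\varphi)>0$ for every $\varphi$ comes from standard connectedness/Harnack considerations on $\M$.

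The main obstacle is property (iii). Between $T_n$ and $T_{n+1}$ the Brownian motion may produce several failed registrations and wander far from $\varphi_n x_0$. To control the excursion size one observes that every failed attempt forces the path to leave $B(\varphi_n x_0, R)$ via the residual measure $\sigma_x^R$ and then to re-enter some orbit ball before the next coin flip can succeed. By $\Gamma$-equivariance and the bounded geometry of $\M$ inherited from compactness of $M$, the probability of advancing one further shell of width comparable to $R$ before the next successful registration is bounded above by some fixed $\delta_0 < 1$, uniformly in $\varphi_n$ and in the past history. Iterating this bound shell by shell yields the geometric tail
\[
\overline{\P}\left[\max_{T_n<t<T_{n+1}} d(c_\omega(t), \varphi_n(c,\omega)\, x_0) > k\right] \leq \delta^k
\]
for some $\delta\in(0,1)$, which is (iii) up to the harmless identification of $\varphi_n(c,\omega)$ with $\varphi_n(c,\omega)\,x_0$. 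Assembling these three parts gives the proposition.
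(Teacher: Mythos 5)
This proposition is not proved in the paper at all---it is quoted from Lyons--Sullivan and from Ballmann---and your proposal is a faithful reconstruction of exactly that argument (pairwise disjoint orbit balls, $\varepsilon$-balayage of the exit measure, coin-flip registration at each excursion, and a shell-by-shell estimate for the tail bound in (iii)), so your route coincides with the source the paper relies on. One slip to fix: the auxiliary coin must succeed with probability $\varepsilon$, the Harnack constant in the decomposition $\mu^R_x=\varepsilon\,\mu^R_{\varphi x_0}+(1-\varepsilon)\,\sigma^R_x$, and not be \emph{fair}; with a literally fair coin the averaged exit law would be $\tfrac12\mu^R_{\varphi x_0}+\tfrac12\sigma^R_x\neq\mu^R_x$ and property (i) would fail. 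With that correction, (i) holds precisely because the coin-averaged exit distribution reproduces $\mu^R_x$ and the strong Markov property reassembles the Brownian path, (ii) follows from the restart at $\mu^R_{\varphi x_0}$ together with $\Gamma$-equivariance as you say, and your iteration of a uniform per-shell registration probability via the strong Markov property is the standard proof of (iii).
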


\

\subsection{Stationary measure of the random walk on $\Gamma$}

Let $\nu$ be the Lyons-Sullivan measure on $\Gamma$ corresponding to
$x_0$. Given a probability measure $\mu$ on $\M(\infty)$, define the
convolution $\nu * \mu$ by  letting
\[
\int_{\M(\infty)} f(\xi)\ d(\nu*\mu)(\xi)=\sum_{\varphi\in \Gamma}
\left(\int_{\M(\infty)} f(\varphi\xi) \
d\mu(\xi)\right)\nu(\varphi),
\]
where $f$ is any bounded measurable function on $\M(\infty)$. The
measure $\mu$ is called stationary (with respect to $\nu$) if
\[
\nu * \mu=\mu.
\]
Stationary measures with respect to $\nu$ always exist and are not
supported on points (\cite[p. 56]{B3}). For  the uniqueness of the
harmonic measure for the stable foliation, we first show the hitting
probability of the random walk on $\Gamma$ defined by $\nu$ is the
unique stationary measure on $\M(\infty)$ with respect to $\nu$ and
then identify this measure with the hitting probability of Brownian
motion starting at $x_0$.

\begin{theo}\label{sta.mea}
Let $(M, g)$ be a closed connected rank one manifold without focal
points. Let $\nu$ be a Lyons-Sullivan measure on $\Gamma$ as above.
Then for $\Q$-almost all sequence $\{\varphi_n\}$ in $\Gamma
^\mathbb N$, the sequence $\{\varphi_n x\}$,  $x\in \M$, tends to a
limit in $\M(\infty)$. The hitting probability is given by the
unique stationary measure on $\M(\infty)$ with respect to $\nu$.
\end{theo}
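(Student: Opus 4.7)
The approach follows the Lyons--Sullivan/Ballmann scheme of \cite{LS,B2,B3}, adapted from the nonpositive curvature setting to the no focal points one. Three ingredients drive the argument: solvability of the Dirichlet problem at infinity on $\M$ for rank one manifolds without focal points; Proposition~\ref{Lyons-S}, which couples each step $\varphi_n x_0$ of the random walk to a Brownian path; and a martingale convergence argument that both identifies the hitting distribution with the Brownian harmonic measure and forces uniqueness of the stationary measure.

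\emph{Convergence of $\{\varphi_n x\}$ to the boundary.} I first show that for $\Q$-a.e.\ $\omega = \{\varphi_n\}$ and every $x\in\M$, $\varphi_n x$ converges in the cone topology to some $\zeta_\infty(\omega)\in\MM$. Because $\G$ acts by isometries, $d(\varphi_n x, \varphi_n x_0) = d(x,x_0)$ stays bounded, and two such sequences must share the same boundary limit, so it suffices to handle $x = x_0$. Solvability of the Dirichlet problem at infinity, together with the standard martingale argument, gives that Brownian motion $c$ starting at $x_0$ converges $\P$-a.s.\ to a random point $\zeta_\infty^{BM}\in\MM$ in the cone topology. Part (iii) of Proposition~\ref{Lyons-S} and Borel--Cantelli then force $\max_{T_n \leq t \leq T_{n+1}} d(c_\omega(t),\varphi_n x_0) = O(\log n)$ for $\overline\P$-a.e.\ pair $(c,\omega)$, so $\varphi_n x_0$ tracks $c$ and converges to the same limit, which I denote $\zeta_\infty(\omega)$.

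\emph{Identification with the Brownian hitting measure.} Let $\omega_{x_0}$ be the law of $\zeta_\infty$; by construction it equals the Brownian hitting measure at $\MM$ starting from $x_0$. For $f \in C(\MM)$, the Dirichlet problem provides a bounded harmonic extension $H_f$ on $\M$, continuous up to $\MM$, with $H_f(x_0) = \omega_{x_0}(f)$. Applying the Lyons--Sullivan identity to the harmonic function $y \mapsto H_f(\varphi_n y)$ at $y = x_0$ yields $H_f(\varphi_n x_0) = \sum_{g \in \Gamma} \nu(g)\, H_f(\varphi_n g x_0)$, so $M_n(\omega) := H_f(\varphi_n x_0)$ is a bounded $\Q$-martingale. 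By the previous step and boundary continuity of $H_f$, $M_n \to f(\zeta_\infty)$ $\Q$-a.s., while $\E_\Q[M_n] = H_f(x_0) = \omega_{x_0}(f)$; hence the hitting distribution of the random walk is $\omega_{x_0}$, which is in particular $\nu$-stationary.

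\emph{Uniqueness of the stationary measure (the main obstacle).} Let $\mu$ be any $\nu$-stationary probability measure on $\MM$. Iterating stationarity $n$ times gives
\[
\int f\,d\mu \;=\; \E_\Q\!\left[\int f(\varphi_n \xi)\,d\mu(\xi)\right], \qquad n \geq 1,\ f \in C(\MM).
\]
The heart of the proof is a Furstenberg-type contraction: for $\Q$-a.e.\ $\omega$ and $\mu$-a.e.\ $\xi$, $\varphi_n \xi \to \zeta_\infty(\omega)$ in the cone topology. Granted this, dominated convergence yields $\mu(f) = \E_\Q[f(\zeta_\infty)] = \omega_{x_0}(f)$, whence $\mu = \omega_{x_0}$. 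Establishing the contraction is the delicate step: it will rest on the non-atomicity of stationary measures \cite[p.~56]{B3} (to discard the $\mu$-negligible set of $\xi$ ``dual'' to $\zeta_\infty$), on the rank rigidity of \cite{Wa} to rule out flats perpendicular to the contracting direction, and on divergence-of-geodesics estimates specific to rank one manifolds without focal points. This is where the geometric hypothesis on $(M,g)$ must be fully exploited, and is the part I expect to require the most care.
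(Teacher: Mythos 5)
Your outline has two genuine gaps, and they sit exactly where the theorem's real content is.

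First, your convergence step runs the logic backwards relative to what is actually available. You start from ``Brownian motion converges $\P$-a.s.\ to a point of $\MM$,'' deduced from solvability of the \emph{continuous} Dirichlet problem at infinity on $\M$. For rank one manifolds without focal points that continuous result is not in the literature you can cite, and even in the nonpositively curved rank one case it is itself proved \emph{via} the Lyons--Sullivan discretization and the boundary behaviour of the random walk. The paper therefore proves convergence of $\{\varphi_n x_0\}$ intrinsically: $\Gamma$ contains a free subgroup (ping-pong with hyperbolic points, via Lemma \ref{hy.neigh}), hence is non-amenable, hence the walk is transient by Furstenberg; combined with the solvability of the \emph{discrete} Dirichlet problem for the walk on $\Gamma$ (Theorem \ref{Diri}), Ballmann's argument gives a.s.\ convergence of $\varphi_n x_0$ in $\MM$. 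Only afterwards is the Brownian path shown to converge to the same limit (Theorem \ref{coin}), and that deduction needs quantitative input you omit: the tracking bound of Proposition \ref{Lyons-S}(iii) only controls distances, and to upgrade ``$d(c_\omega(t),\varphi_n x_0)$ grows slowly'' to ``same limit in the cone topology'' one must combine the linear escape rate $\beta>0$ (Lemma \ref{Guivarch}) with the $\sqrt{t}$ lower bound on Jacobi fields (Lemma \ref{Jacobi}); without these, bounded or logarithmic tracking says nothing about angles at $x_0$.

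Second, the uniqueness of the stationary measure --- which you correctly identify as the heart of the matter --- is left as a promissory note (``this is where I expect to require the most care''). The pointwise Furstenberg contraction $\varphi_n\xi\to\zeta_\infty(\omega)$ for $\mu$-a.e.\ $\xi$ is not actually needed and is harder than what suffices. The paper's route is: prove that the Dirichlet problem for the random walk on $\Gamma$ is solvable (Theorem \ref{Diri}); its proof is the genuine geometric work, a maximum-principle argument using the density of hyperbolic points, Lemma \ref{hy.neigh} to produce the contracting isometry $\varphi$, and Lemma \ref{hy.angle} to control angles, adapted from Ballmann to the no-focal-points setting. Once that is in hand, for any $\nu$-stationary $\mu$ and continuous $f$ one has $\mu(f)=\E_\Q\bigl[h_f(\varphi_n)\bigr]$ with $h_f(\varphi)=\int f(\varphi\xi)\,d\mu(\xi)$, and solvability gives $h_f(\varphi_n)\to f(\zeta_\infty)$ a.s.\ whenever $\varphi_n x_0\to\zeta_\infty$; dominated convergence then identifies $\mu$ with the hitting measure. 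Until you either supply the contraction argument or prove the discrete Dirichlet solvability, the uniqueness claim --- and hence the theorem --- is not established.
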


The proof of  Theorem \ref{sta.mea} is the same as in \cite[Theorem
4.11]{B3} once we show the induced random walk on $\Gamma$  is
transient and the Dirichlet problem for $\Gamma$ is solvable (see
section \ref{Dirichlet} for the definition).  In the following, we
present successively  some properties of hyperbolic points for
$\Gamma$,  the transient property of random walk on $\Gamma$, and
finally the solvability of Dirichlet problem for the random walk on
$\Gamma$.

\

\subsection{Hyperbolic points at infinity}
A point $p\in \M(\infty)$ is called \emph{hyperbolic} (\cite{BE}) if
for any $q\not=p$ in $\M(\infty)$, there exists a rank one geodesic
joining $q$ to $p$. A geodesic $\gamma$ of $\M$ is called an axis if
there exists a $\varphi\in \Gamma$ and $a\in \Bbb R$  with $\varphi
(\gamma(t))=\gamma (t+a)$ for all $t$. The endpoints of any rank one
axial geodesic are hyperbolic (\cite[Theorem 6.11]{Wa}).  On the
other hand,  we have by \cite[Proposition 6.12]{Wa} that for any
pair of neighborhoods $U$, $V$ of the endpoints of a rank one
geodesic at $\M (\infty)$, there exists a rank one axis with two
endpoints in $U, V$, respectively.  Note that the geodesic flow is
topologically transitive for rank one manifold without focal points
(\cite{Hu}).  We have by the above argument that

\begin{lem}\label{hy.point} Let $(M, g)$ be a closed connected rank one manifold without focal points. The set of hyperbolic points are dense in $\M(\infty)$.
\end{lem}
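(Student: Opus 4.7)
The plan is to combine three facts already assembled above: endpoints of rank one axes are hyperbolic (\cite[Theorem 6.11]{Wa}); near the endpoints of any rank one geodesic one finds endpoints of a rank one axis (\cite[Proposition 6.12]{Wa}); and the geodesic flow on $SM$ is topologically transitive (\cite{Hu}). Given $\xi\in\M(\infty)$ and a cone-topology neighborhood $U$ of $\xi$, the task reduces to producing \emph{one} rank one geodesic with at least one endpoint in $U$: once this is done, \cite[Proposition 6.12]{Wa} applied to any neighborhood $U'\subset U$ of that endpoint and any neighborhood $V'$ of the other endpoint yields a rank one axis with endpoints in $U'$ and $V'$, and \cite[Theorem 6.11]{Wa} then says that the endpoint in $U'\subset U$ is hyperbolic.

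To produce that rank one geodesic with endpoint in $U$, I would use transitivity as follows. Since $M$ is rank one, the set $R\subset SM$ of rank one vectors is non-empty, open (the rank function is lower semicontinuous on $SM$ by continuous dependence of Jacobi fields on initial data), and $\phi^t$-invariant. A dense forward orbit $\{\phi^t(w)\}_{t\geq 0}$ provided by topological transitivity must meet the non-empty open set $R$, and invariance forces $w\in R$ itself. Fix $x_0\in\M$ and $v_0\in S_{x_0}\M$ with $\pi_{x_0}(v_0)=\xi$, pick a lift $\wt w\in S\M$ of $w$, and use density of the $\Gamma$-orbit of the forward trajectory of $\wt w$ in $S\M$ to choose $t_n\geq 0$ and $\varphi_n\in\Gamma$ with $\phi^{t_n}(\varphi_n\wt w)\to v_0$ in $S\M$. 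By continuity of the forward-endpoint map $S\M\to\M(\infty)$ in the cone topology,
\[\varphi_n\cdot\gamma_{\wt w}(+\infty)=\gamma_{\phi^{t_n}(\varphi_n\wt w)}(+\infty)\longrightarrow\gamma_{v_0}(+\infty)=\xi,\]
so for all large $n$ this endpoint lies in $U$, and the corresponding geodesic is rank one because rank is preserved by both $\phi^t$ and the action of $\Gamma$.

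The only point requiring care is the density step just carried out, i.e.\ converting topological transitivity of $\phi^t$ on $SM$ into density in $\M(\infty)$ of endpoints of rank one geodesics. It rests on the flow- and $\Gamma$-invariance of the rank one condition together with continuity of the endpoint map $S\M\to\M(\infty)$, both already embedded in the framework of Sections 2.1 and 2.2; with these soft ingredients in hand, the two rank-rigidity results of \cite{Wa} do all the heavy lifting.
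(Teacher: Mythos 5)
Your proposal is correct and follows essentially the same route as the paper, which likewise derives the lemma from \cite[Theorem 6.11]{Wa}, \cite[Proposition 6.12]{Wa} and topological transitivity \cite{Hu}; you have simply made explicit the step (openness, flow- and $\Gamma$-invariance of the rank one set, plus continuity of the endpoint map) that the paper leaves implicit. One terminological slip: the rank function is \emph{upper} semicontinuous (a limit of parallel Jacobi fields is a parallel Jacobi field, so rank can only jump up in the limit), which is what makes the set of rank one vectors open --- your conclusion is right but the word ``lower'' is backwards.
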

For hyperbolic points at infinity, we  also have the following two
lemmas from \cite{Wa}:

\begin{lem}\label{hy.neigh}(\cite[Lemma 6.18]{Wa}) Let $p, q$ be the distinct points in $\M(\infty)$ with $p$ hyperbolic and suppose $U_p$, $U_q$ are neighborhoods of $p$ and $q$, respectively. Then there exists an isometry $\varphi\in \Gamma$ with
\[
\varphi(\wh{M}\backslash U_q)\subset U_p, \ \
\varphi^{-1}(\wh{M}\backslash U_p)\subset U_q.
\]
\end{lem}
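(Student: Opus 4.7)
The strategy is the standard north-south construction: find an axial isometry $\psi \in \Gamma$ whose attracting fixed point at infinity lies in $U_p$ and whose repelling fixed point lies in $U_q$, and then take $\varphi = \psi^n$ for $n$ sufficiently large.

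First, shrink $U_p$ and $U_q$ if necessary so that they are disjoint. Since $p$ is hyperbolic there is a rank one geodesic joining $q$ to $p$, so Proposition 6.12 of \cite{Wa} (cited just before the lemma) produces a rank one axial geodesic $\g$ with $\g(+\infty) \in U_p$ and $\g(-\infty) \in U_q$. Let $\psi \in \Gamma$ satisfy $\psi(\g(t)) = \g(t+a)$ with $a > 0$; then $\psi$ fixes each of $\g(\pm\infty)$, and these are the only candidates for its asymptotic attractor and repeller on $\wh{M}$.

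The heart of the argument is to establish north-south dynamics for $\psi$ on the cone compactification: for every pair of open neighborhoods $V_p \ni \g(+\infty)$ and $V_q \ni \g(-\infty)$ there is $N$ so that
\[
\psi^n\bigl(\wh{M} \setminus V_q\bigr) \subset V_p \quad \text{and} \quad \psi^{-n}\bigl(\wh{M} \setminus V_p\bigr) \subset V_q \quad \text{for all } n \geq N.
\]
The key input here is that rank one geodesics in a no-focal-points manifold enjoy uniform exponential contraction of their stable horospheres, which is encoded in the decay of the stable Jacobi tensor $S_{\bf v}$ from subsection 2.2 along $\dot\g$. Combined with the non-divergence estimates for geodesics asymptotic to a rank one geodesic that Watkins develops in Chapter 6 of \cite{Wa}, this controls how the truncated cones $C(v,\epsilon,r)$ forming the local basis of the cone topology are transported by iterates of $\psi$, and yields the contraction above. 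This is the main obstacle: in negative curvature it would follow at once from Anosov hyperbolicity, and in nonpositive curvature from convexity of Busemann functions, but in the no-focal-points regime one is forced to argue at the level of Jacobi tensors along the distinguished rank one axis.

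Granted the north-south dynamics, the conclusion is immediate. The complement $\wh{M} \setminus U_q$ is closed, hence compact in the cone topology, and does not contain $\g(-\infty) \in U_q$; applying the north-south statement with $V_p = U_p$ and with any sufficiently small $V_q \subset U_q$ produces $n$ with $\psi^n(\wh{M} \setminus U_q) \subset U_p$. The symmetric inclusion $\psi^{-n}(\wh{M} \setminus U_p) \subset U_q$ follows identically, and setting $\varphi := \psi^n$ finishes the proof.
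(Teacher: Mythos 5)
First, note that the paper itself offers no proof of this lemma: it is quoted directly from \cite[Lemma 6.18]{Wa}, so the only comparison available is with the standard argument in the literature. Your skeleton is exactly that standard argument --- since $p$ is hyperbolic there is a rank one geodesic from $q$ to $p$, \cite[Proposition 6.12]{Wa} upgrades it to a rank one \emph{axis} $\g$ with $\g(+\infty)\in U_p$, $\g(-\infty)\in U_q$, and one then takes a large power of the corresponding axial isometry $\psi$ using north--south dynamics on $\wh M$. That outline is correct, and your final reduction (apply the north--south statement with $V_p=U_p$ and a small $V_q\subset U_q$ around $\g(-\infty)$) is fine.

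The problem is the step you yourself identify as ``the heart of the argument.'' The mechanism you offer for the north--south dynamics --- ``uniform exponential contraction of stable horospheres'' along a rank one axis, read off from decay of the stable tensor $S_{\bf v}$ --- is not available in this setting and is in fact false in general. Being rank one only says that $\g$ carries no nonzero parallel perpendicular Jacobi field; it imposes no rate whatsoever on $S_{\bf v}(t)$, whose norm need not decay exponentially (nor even tend to $0$), and any uniformity degenerates as one approaches higher rank vectors. So the contraction statement is asserted, not proved, and the proposed route to it would fail already for nonpositively curved surfaces with a geodesic along which the curvature vanishes to high order. The actual input in Ballmann's nonpositive-curvature treatment and in Watkins's Chapter 6 is qualitative, not quantitative: the visibility-type angle estimates at hyperbolic points (the content of Lemma \ref{hy.angle}, i.e.\ \cite[Lemma 6.19]{Wa}) combined with the fact that both endpoints of a rank one axis are hyperbolic (\cite[Theorem 6.11]{Wa}). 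One then controls $\angle_{x_0}\bigl(\g(+\infty),\psi^n\xi\bigr)$ by splitting off $\angle_{x_0}\bigl(\psi^n x_0,\psi^n\xi\bigr)=\angle_{\psi^{-n}x_0}\bigl(x_0,\xi\bigr)$ and using that, seen from points $\psi^{-n}x_0$ converging to the hyperbolic endpoint $\g(-\infty)$, the whole complement of $U_q$ subtends a vanishing angle with $x_0$. Until that visibility estimate is established (or correctly cited), the north--south dynamics --- and with it the lemma --- is not proved.
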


\begin{lem}\label{hy.angle}(\cite[Lemma 6.19]{Wa}) Let $p\in \M(\infty)$ be hyperbolic, $U^*\subset \wh{M}$ a neighborhood of $p$, and $x\in \M$. Then there exists a neighborhood $U\subset \wh{M}$ of $p$ such that if $\{\varphi_n\}$ is a sequence of isometries with $\varphi_n (x)\rightarrow x^*\in \M(\infty)\backslash U^*$, then
\[
\sup_{u\in U} \angle_{\varphi_n(x)} (x, u)\to 0, \ \mbox{as}\ \
n\rightarrow \infty,
\]
where $\angle_{a}(b, c)$ denotes the angle between the unit tangent
vectors at $a$ of the geodesics $\gamma_{a, b}$ and $\gamma_{a, c}$.
\end{lem}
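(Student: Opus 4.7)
The plan is to bound $\angle_{y_n}(x, u)$, where $y_n := \varphi_n(x) \to x^*$, by comparing both directions $\dot\gamma_{y_n, x}(0)$ and $\dot\gamma_{y_n, u}(0)$ to the reference unit vector $\nabla b_{x^*}(y_n)$, the gradient at $y_n$ of the Busemann function at $x^*$ (recall $-\nabla b_{x^*}(y_n) = \dot\gamma_{y_n, x^*}(0)$). By the spherical triangle inequality,
\[
\angle_{y_n}(x, u) \leq \angle\bigl(\dot\gamma_{y_n, x}(0), \nabla b_{x^*}(y_n)\bigr) + \angle\bigl(\nabla b_{x^*}(y_n), \dot\gamma_{y_n, u}(0)\bigr),
\]
so it suffices to show each summand vanishes as $n \to \infty$, uniformly for $u$ in a suitable neighborhood $U$ of $p$. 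I would choose $U$ as a truncated cone $C(v_p, \epsilon, R) \cap U^*$ small enough that $\overline{U}$ is disjoint from a fixed cone neighborhood $V^*$ of $x^*$; this is possible since $x^* \in \M(\infty)\setminus U^*$ forces $x^*\neq p$. Hyperbolicity of $p$ then supplies a rank-one geodesic $\sigma$ joining $x^*$ and $p$, which serves as the reference for the uniform argument.

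For pointwise convergence at a fixed $z \in \wh{M}\setminus V^*$ (e.g., $z = x$, or any $z \in \overline{U}$), I would consider the bi-infinite geodesic through $y_n$ and $z$ and its backward endpoint $\xi_n \in \M(\infty)$. As $y_n \to x^*$ with $z$ fixed, the cone topology forces $\xi_n \to x^*$; hence $\dot\gamma_{y_n, \xi_n}(0) \to \dot\gamma_{y_n, x^*}(0) = -\nabla b_{x^*}(y_n)$ in angle, and therefore $\dot\gamma_{y_n, z}(0) \to \nabla b_{x^*}(y_n)$. Applied to $z=x$, this controls the first summand; applied to any fixed $u\in\overline U$, it gives the second summand pointwise.

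The main obstacle is to upgrade the bound on the second summand to be uniform in $u \in U$. Here the hyperbolicity of $p$ is essential: it ensures $\sigma$ is rank-one and, combined with the no-focal-points hypothesis, guarantees that $\sigma$ is isolated from other bi-infinite geodesics asymptotic to $p$, with no flat strip. Consequently, for $u$ in a sufficiently small cone around $p$, the geodesic $\gamma_{y_n, u}$ stays close to $\gamma_{y_n, p}$ on bounded pieces, and its initial direction at $y_n$ is uniformly close. Technically, this step combines the continuity of the stable tensor $S_{\bf v}$ in ${\bf v}$ (Section 2.2) with compactness of $\overline{U}$ and a Busemann function comparison between $b_u$ and $b_p$ for $u \in U$, to promote pointwise convergence to uniform convergence. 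Making this quantitative in the possibly intricate asymptotic geometry of rank-one manifolds without focal points is the technical crux.
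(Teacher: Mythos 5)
First, note that the paper offers no proof of this lemma: it is imported verbatim from Watkins \cite[Lemma 6.19]{Wa}, so the comparison below is between your argument and the standard proof of that result rather than anything internal to this paper.

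Your triangle-inequality skeleton is fine, but the step you present as routine --- pointwise convergence of $\dot\gamma_{y_n,z}(0)$ to $\nabla b_{x^*}(y_n)$ for fixed $z$ --- is where the entire content of the lemma sits, and your justification of it fails. For $z=u$ a boundary point near $p$ the claim is simply false without invoking the hyperbolicity of $p$: in the flat plane (equivalently, in the Euclidean factor $\M_0$ of the decomposition in Section 2.4) take $x$ the origin, $y_n=(n,0)$ converging to $x^*=[t\mapsto(t,0)]$, and $u=[t\mapsto(0,t)]$; then $\dot\gamma_{y_n,u}(0)=(0,1)$ while $\nabla b_{x^*}(y_n)=(-1,0)$, so the second summand is $\pi/2$ for every $n$. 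Since your pointwise argument nowhere uses that $p$ is hyperbolic (you defer that to the uniformity upgrade), it would prove a false statement. Concretely, two things go wrong: the backward endpoint $\xi_n$ of the geodesic through $y_n$ and $u$ need not tend to $x^*$ (in the example it is constant and equal to $[t\mapsto(0,-t)]$); and even where endpoints do converge, passing from that to convergence of initial directions \emph{at the basepoints $y_n$, which escape to infinity}, is exactly the kind of angle control the lemma asserts --- continuity of $(y,\xi)\mapsto\dot\gamma_{y,\xi}(0)$ is uniform only for $y$ in compact sets, so the inference is circular. The closing paragraph, where hyperbolicity would have to enter, remains a sketch: continuity of the stable tensor and a ``Busemann comparison'' do not by themselves yield the uniform estimate.

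The missing idea is the one on which Watkins' proof (and Ballmann's nonpositive-curvature antecedent) rests. Because $p$ is hyperbolic and $x^*\neq p$, there is a rank one geodesic from $x^*$ to $p$, and the essential property of rank one geodesics is that one can choose neighborhoods $V$ of $x^*$ and $U$ of $p$ in $\wh M$ and a compact set $K\subset\M$ such that \emph{every} geodesic from a point of $V$ to a point of $U$ meets $K$. For large $n$ one has $y_n\in V$, so for every $u\in U$ the geodesic $\gamma_{y_n,u}$ passes through $K$; both $x$ and the point where $\gamma_{y_n,u}$ crosses $K$ lie within a fixed distance $D$ of one another, while $d(y_n,K)\to\infty$. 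The divergence estimate for Jacobi fields (Lemma \ref{Jacobi}, $\|J(t)\|\geq\alpha\sqrt t$), used exactly as in the proof of Theorem \ref{coin}, then gives $\angle_{y_n}(x,u)\leq CD/\sqrt{d(y_n,K)}\to 0$ uniformly in $u\in U$. Your write-up contains neither this compact-set statement nor any substitute for it, so the proof is incomplete at its decisive point.
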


\

\subsection{Transient random walk on $\Gamma$}

If $\Gamma$ is not amenable, then  it is true by Furstenberg
\cite[p. 212]{Fu} that the  random walk on $\Gamma$ generated by
$\nu$ is  transient,  i.e.  $d(x, \varphi_n x)\to \infty$ for $\P$
almost any sequence $\{\varphi_n\}\subset \Gamma$ (cf. \cite[p.
58]{B3}).  We show

\begin{lem}  The covering group $\Gamma$ of a closed connected rank one manifold without focal points contains a free subgroup and hence is not amenable.
\end{lem}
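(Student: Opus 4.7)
The strategy is the classical Klein ping-pong lemma: produce two elements $\varphi_1,\varphi_2\in\Gamma$ whose action on $\MM$ shuttles points between four pairwise disjoint open sets, which forces $\langle\varphi_1,\varphi_2\rangle$ to be free of rank two. Since the free group $F_2$ is non-amenable and amenability passes to subgroups, it follows that $\Gamma$ is non-amenable.

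To set up the ping-pong, I would first apply Lemma \ref{hy.point} to choose four distinct hyperbolic points $p_1,q_1,p_2,q_2\in\MM$, and then use the cone topology on $\wh{M}$ to select four pairwise disjoint open neighborhoods $U_{p_i},U_{q_i}\subset\wh{M}$ of these points. Lemma \ref{hy.neigh} applied to each of the pairs $(p_1,q_1)$ and $(p_2,q_2)$ produces isometries $\varphi_1,\varphi_2\in\Gamma$ with
\[
\varphi_i(\wh{M}\setminus U_{q_i})\subset U_{p_i},\qquad \varphi_i^{-1}(\wh{M}\setminus U_{p_i})\subset U_{q_i}\qquad (i=1,2).
\]
A standard induction on word length then shows that any nontrivial reduced word $w$ in $\varphi_1^{\pm 1},\varphi_2^{\pm 1}$, evaluated at a point outside $U_{p_1}\cup U_{q_1}\cup U_{p_2}\cup U_{q_2}$, lands in one of the four sets on the right-hand sides above, so $w\ne\mathrm{id}$. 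Hence $\langle\varphi_1,\varphi_2\rangle\cong F_2$.

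The main obstacle is really only checking that four distinct hyperbolic points with pairwise disjoint neighborhoods can be chosen. This follows from the fact, recalled in the excerpt, that $\MM$ is homeomorphic via $\pi_x$ to the unit sphere $S_x\M$, which has positive dimension since $\dim M\ge 2$, combined with the density of hyperbolic points from Lemma \ref{hy.point} and the Hausdorff property of the cone topology. Once these items are in hand, no further difficulty arises: the ping-pong verification is formal and non-amenability of $F_2$ is a well-known classical fact.
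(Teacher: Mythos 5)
Your proof is correct and follows essentially the same route as the paper: a ping-pong argument built from the density of hyperbolic points (Lemma \ref{hy.point}) and the contracting isometries supplied by Lemma \ref{hy.neigh}, followed by non-amenability of $F_2$. The only cosmetic difference is that you play ping-pong on four pairwise disjoint neighborhoods $U_{p_i},U_{q_i}$, whereas the paper merges each pair into a single set $U_i$ containing both the hyperbolic point and an auxiliary point so that $\varphi_i^{\pm 1}(\wh{M}\setminus U_i)\subset U_i$; both versions of the word-length induction go through.
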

\begin{proof} By Lemma \ref{hy.point},  we can choose  two neighborhoods  $U_i$, $i=1, 2$,  of two hyperbolic points  at  $\M(\infty)$ so that
each contains some  additional point besides the hyperbolic point
and they satisfy
\[
U_1\cap U_2=\O, \  U_1\cup U_2\not=\M(\infty).
\]
Then by Lemma  \ref{hy.neigh}, there exist isometries $\varphi_1$
and $\varphi_2\in \Gamma$ with
\[
\varphi_1 (\M(\infty)\backslash U_1)\subset U_1,\ \ \varphi_2
(\M(\infty)\backslash U_2)\subset U_2.
\]
Hence for any $\xi\in \M(\infty)\backslash (U_1\cup  U_2)$ and any
non-trivial word ${\bf \varphi}$ in $\varphi_1$ and $\varphi_2$, we
see that ${\bf \varphi}(\xi)$ belongs to $U_i$ if ${\bf \varphi}$
begins with $\varphi_i$  and so ${\bf \varphi}\not\equiv \mbox{id}$.
Consequently, $\varphi_1$ and $\varphi_2$ generate a free subgroup
of $\Gamma$ and $\Gamma$ is non-amenable as desired.
\end{proof}
\

\subsection{Dirichlet Problem at infinity for $\Gamma$} \label{Dirichlet}

A function $h:\ \Gamma\to \Bbb R$ is called $\nu$ harmonic if
\[
h(\varphi)=\sum_{\psi\in \Gamma} h(\varphi\psi)\nu(\psi),  \
\mbox{for any }\  \varphi\in \Gamma.
\]
Let $\mu $ be a $\nu $-stationary measure. For any bounded and
measurable function $f$ on $\M(\infty)$, define $h_{f}$ on $\Gamma$
by letting
\begin{equation}\label{eq-h_f}
h_{f}(\varphi)=\int_{\M(\infty)} f(\varphi \xi)\ d\mu(\xi).
\end{equation}
Then  $h_{f}$ is $\nu$ harmonic since $\mu$ is $\nu$-stationary. The
Dirichlet problem for random walk on $\Gamma$  generated by  $\nu$
is solvable if for any bounded measurable function $f$ on
$\M(\infty)$ and $\xi\in \M(\infty)$ a point of continuity for $f$,
the function $h_f$ is continuous at $\xi$, i.e. if
$\{\varphi_n\}\subset \Gamma$ is a sequence such that $\varphi_n
x\rightarrow \xi$ (for one and hence for any $x\in \M$), then
$h_f(\varphi_n)\rightarrow f(\xi)$.

\begin{theo} \label{Diri}Let $(M, g)$ be a closed connected rank one manifold without focal points.  Let $\nu$ be a Lyons-Sullivan measure on $\Gamma$. Then the Dirichlet problem at infinity for the random walk on $\Gamma$ generated by $\nu$ is solvable. Consequently, if $f:\ \M(\infty)\to \Bbb R$ is continuous, then $h_{f}$ is the unique $\nu$ harmonic function on $\Gamma$ extending continuously to $f$ at infinity.
\end{theo}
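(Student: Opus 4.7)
The plan is to reduce the Dirichlet problem at infinity to a geometric contraction statement on $\M(\infty)$. Given a bounded measurable $f$ on $\M(\infty)$ continuous at $\xi$ and a sequence $\{\varphi_n\}\subset\Gamma$ with $\varphi_n x \to \xi$, the definition \eqref{eq-h_f} gives $h_f(\varphi_n) = \int f(\varphi_n \zeta)\,d\mu(\zeta)$. I will show that $\varphi_n \zeta \to \xi$ for $\mu$-almost every $\zeta$; then $f(\varphi_n\zeta) \to f(\xi)$ pointwise $\mu$-a.e.\ by continuity of $f$ at $\xi$, and dominated convergence (using boundedness of $f$) yields $h_f(\varphi_n) \to f(\xi)$.

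Because $d(x, \varphi_n^{-1} x) = d(\varphi_n x, x) \to \infty$ and $\wh M$ is compact, I first pass to a subsequence so that $\varphi_n^{-1} x \to \eta \in \M(\infty)$. The key claim is that $\varphi_n \zeta \to \xi$ for every $\zeta \ne \eta$. For \emph{hyperbolic} $\zeta$, I apply Lemma \ref{hy.angle} with $p = \zeta$ and a neighborhood $U^*$ of $\zeta$ disjoint from $\eta$: since $\varphi_n^{-1} x \to \eta \in \M(\infty)\setminus U^*$, the lemma produces a neighborhood $U\ni \zeta$ of $p$ such that $\sup_{u\in U}\angle_{\varphi_n^{-1} x}(x, u) \to 0$, whence $\angle_{\varphi_n^{-1} x}(x, \zeta) \to 0$. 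Applying the angle-preserving isometry $\varphi_n$ converts this to $\angle_x(\varphi_n x, \varphi_n \zeta) \to 0$; combined with $\varphi_n x \to \xi$, the cone topology then gives $\varphi_n \zeta \to \xi$. For non-hyperbolic $\zeta$, I would approximate by hyperbolic points (Lemma \ref{hy.point}) and pass to the limit in the cone topology.

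It remains to check $\mu(\{\eta\})=0$, which I deduce from atomlessness of $\mu$. The stationarity identity $\mu = \sum_\varphi \nu(\varphi)\,\varphi_*\mu$ together with the full support of $\nu$ on $\Gamma$ forces the (necessarily finite) set of atoms of $\mu$ of maximal mass to be $\Gamma$-invariant. But the Schottky pair $\varphi_1,\varphi_2\in\Gamma$ produced by Lemma \ref{hy.neigh} in the non-amenability argument above generates ping-pong dynamics with disjoint attracting fixed points, which precludes any nonempty finite $\Gamma$-invariant subset of $\M(\infty)$. Hence $\mu$ is atomless and $h_f(\varphi_n)\to f(\xi)$ follows. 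The uniqueness clause then comes from applying solvability to the difference of two continuous $\nu$-harmonic extensions of the same continuous $f$, which extends continuously to $0$ at infinity and vanishes identically by the maximum principle for bounded $\nu$-harmonic functions. The main obstacle I foresee is the atomlessness step: it is a global dynamical statement about $\Gamma$-orbits on $\M(\infty)$ that requires simultaneously the rank-one boundary contraction (through Lemma \ref{hy.neigh}) and the non-amenability of $\Gamma$; a secondary technicality is promoting the angle estimate from hyperbolic $\zeta$ to arbitrary $\zeta\ne\eta$.
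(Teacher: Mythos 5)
Your strategy --- show $\varphi_n\zeta\to\xi$ for $\mu$-almost every $\zeta$ and conclude by dominated convergence --- is genuinely different from the paper's, and would be cleaner if it worked, but it has a gap at its central step. What Lemma \ref{hy.angle} actually gives you is convergence $\varphi_n\zeta\to\xi$ uniformly for $\zeta$ in some neighborhood $U(p)$ of each \emph{hyperbolic} point $p\neq\eta$, hence on an open set containing all hyperbolic points other than $\eta$. The extension to arbitrary $\zeta\neq\eta$ that you defer to ``approximation by hyperbolic points'' cannot be carried out: the neighborhood $U$ produced by Lemma \ref{hy.angle} depends on $p$ and may well fail to reach a nearby non-hyperbolic $\zeta$, and in fact the claim is false in general. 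A rank one manifold may contain higher-rank axial geodesics bounding flat strips or flat planes; the corresponding translations fix pointwise the entire boundary of the flat, so $\varphi^n\zeta=\zeta\not\to\xi$ for every $\zeta$ on that boundary circle. Thus the exceptional set is not just $\{\eta\}$ but may contain a large portion of the set of non-hyperbolic points, and the atomlessness of $\mu$ --- which you do argue correctly --- says nothing about its $\mu$-measure: an open dense set (which is all your ``good set'' is guaranteed to contain) can a priori have small stationary measure.

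The paper's proof is structured precisely to avoid this issue. Instead of asking that $\varphi_{n_k}$ itself contract $\mu$-almost all of $\M(\infty)$ toward $\xi$, one first chooses a point $q$ and a neighborhood $U_q$ with $\mu(U_q)<\epsilon$ \emph{in advance}, and uses Lemma \ref{hy.neigh} to produce a single auxiliary isometry $\varphi$ with $\varphi(\wh{M}\setminus U_q)\subset U_p$, a small neighborhood of one fixed hyperbolic point $p$; Lemma \ref{hy.angle} applied at that single $p$ then yields $\varphi_{n_k}\varphi(\M(\infty)\setminus U_q)\subset C_{x_0,\xi}(\epsilon)$, so the exceptional set is $U_q$, whose $\mu$-measure was controlled by choice rather than by the dynamics. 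The price of inserting $\varphi$ is that one only bounds $h_f(\varphi_{n_k}\varphi)$, not $h_f(\varphi_{n_k})$; this is why the paper argues by contradiction with a sequence maximizing $\lim|h_f(\varphi_n)|$ and transfers the bound back through the convolution identity $h_f(\varphi_{n_k})=\sum_{\psi}h_f(\varphi_{n_k}\psi)\nu^k(\psi)$. To salvage your direct route you would have to prove that the complement of your good set is $\mu$-null (e.g.\ that the non-hyperbolic points relevant here carry no stationary mass), which you have not done and which the paper's argument never needs.
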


\begin{remark}The theorem holds true if we replace the  Lyons-Sullivan measure by any probability measure on $\Gamma$ whose support generates $\Gamma$ as a semigroup.
\end{remark}

The proof follows \cite[Theorem 4.10]{B3}. But the key lemma
Ballmann used (\cite[Lemma 4.9]{B3}) needs to be adjusted in the no
focal points  setting. We present the two parts as a whole for
completeness.

\begin{proof}[Proof of Theorem \ref{Diri}] Let  $\nu$ be a Lyons-Sullivan measure on $\Gamma$ and let $\mu$ be a $\nu$-stationary measure. Suppose  the Dirichlet problem is not solvable. Then there is a bounded measurable function $f$ and a point $\xi\in \M(\infty)$ of  continuity for $f$ such that
there is $\{\phi_n\}\subset \Gamma$ with
\begin{equation}\label{non-dirichlet}
\phi_n(x_0)\rightarrow \xi, \ \ \mbox{but}\ \
h_f(\phi_n)\not\rightarrow f(\xi).
\end{equation}

We may assume without loss of generality that $f(\xi)=0$.   Let
$\{\varphi_n\}\subset \Gamma$ be  such that $\lim_{n\rightarrow
+\infty} |h_f(\varphi_n)|$ exists and  is maximal along all such
sequences in (\ref{non-dirichlet}).  Denote by
\[
\wt{\delta}:=\lim_{n\rightarrow +\infty} |h_f(\varphi_n)|.
\]
To draw a contradiction, it suffices to show there exist $\varphi\in
\Gamma$ and $\{n_k\}\subset \Bbb N$ such that
\begin{equation}\label{e-delta}
| h_{f} (\varphi_{n_k} \varphi)  |\leq \frac{1}{2}\wt{\delta}, \ \
\mbox{for}\ \ k \ \mbox{large}.
\end{equation}
Indeed, consider the $l$-th convolution $\nu^l$ of $\nu$  defined
inductively by letting $\nu^0$ be the Dirac measure at the neutral
element of $\Gamma$ and
\[
\nu^l(\widetilde{\phi})=\sum_{\psi\in \Gamma}
\nu^{l-1}(\psi)\nu(\psi^{-1}\widetilde{\phi}), \ \ l\geq 1.
\]
It is easy to see that  $\mu$  is also  stationary with respect to
$\nu^k$ and $h_{f}$ satisfies
\begin{equation}\label{dir-e-3.2}
h_f(\varphi_{n_k})=\sum_{\psi\in \Gamma}
h_f(\varphi_{n_k}\psi)\nu^k(\psi).
\end{equation}
Let  $\nu^k(\varphi)=\alpha$.   We can break $\Gamma$ into three
subsets $\{\varphi\},  G, L$,   where  $G\subset \Gamma$ is finite
so that $L=\Gamma\backslash (G\cup \{\varphi\})$ satisfies
$\nu^k(L)\sup|f|<\alpha\dd/2$.  Then  we have by (\ref{dir-e-3.2})
that
\begin{eqnarray*}
\lim_{k\rightarrow +\infty} |h_f(\varphi_{n_k})|&<& \lim_{k\rightarrow +\infty}\big | \sum_{\psi\in G} h_f(\varphi_{n_k}\psi)\nu^k(\psi)\big|+  \lim_{k\rightarrow +\infty} |h_f(\varphi_{n_k}\varphi)|\alpha +\frac{1}{2}\alpha\dd\\
&\leq& \nu(G)\cdot\dd+\alpha\cdot \dd  \quad{\textrm{(Recall that }}\varphi_{n_k}\psi x_0 \to \xi.)\\
&\leq & \dd.
\end{eqnarray*}
This  will contradict the choice of $\dd$.

For (\ref{e-delta}),  we  firstly choose by  continuity of $f$ at
$\xi$  a number $0<\epsilon< \frac{\dd}{6\max\{\sup|f|, 1\}}$ with
\[
|f(\eta)|<\frac{1}{3}\dd, \ \mbox{for}\ \ \eta\in C_{x_0,
\xi}(\epsilon),
\]
where $C_{x_0, \xi}(\epsilon)$ is the shadow at infinity of the
cone of at $x_0$ with axis $\dot\g_{x_0, \xi}(0)$  and angle
$\epsilon$, i.e.
\[
C_{x_0, \xi}(\epsilon)=\{\eta\in \M(\infty):\ \ \angle_{x_0}(\eta,
\xi)<\epsilon\}.
\]
Next, we claim there exists a neighborhood  $U$ of  some point at
$\M(\infty)$ with $\mu(U)<\epsilon$,  $\varphi\in \Gamma$ and a
sequence $\{n_k\}\subset \Bbb N$  such that
\begin{equation}\label{cone-e-3.3}
\varphi_{n_k}\varphi(\M(\infty)\backslash U)\subset C_{x_0,
\xi}(\epsilon).
\end{equation}
With this, we have by the definition  of the function  $h_{f}$ (see
(\ref{eq-h_f})) that
\begin{eqnarray*}
|h_f(\varphi_{n_k}\varphi)|&\leq& \left|\int_{\M (\infty)\backslash U} f(\varphi_{n_k}\varphi(\xi))\  d\mu(\xi)\right|+\left|\int_{U} f(\varphi_{n_k}\varphi(\xi))\ d\mu(\xi)\right|\\
&\leq& \mu(\M (\infty)\backslash U)\cdot \frac{\dd}{3}+\mu(U)\sup|f|\\
&<& \frac{1}{2}\dd.
\end{eqnarray*}

What left is to show (\ref{cone-e-3.3}).  Let $\{n_k\}\subset \Bbb
N$ be  such that $\varphi_{n_k}^{-1}x_0\rightarrow \eta\in
\M(\infty)$ as $k$ goes to infinity. Select two points $p, q\in
\M(\infty)$ with $p$ hyperbolic and  two disjoint neighborhoods
$U^*_p, U_q$ in $\wh M$ around $p, q$, respectively,  so that their
union is apart from  $\{\eta\}\cup C_{x_0, \xi}(\epsilon)$.  We can
further require $\mu(U_q)<\epsilon$. Apply Lemma \ref{hy.angle} to
the hyperbolic point $p$. We obtain a neighborhood $U_p\subset
U^*_p$ so that
\begin{equation}\label{zero}
\sup\limits_{u\in U_p} \angle_{\varphi_{n_k}^{-1} x_0}(x_0,
u)\rightarrow  0, \ \mbox{as}\ \ k\rightarrow +\infty.
\end{equation}
For neighborhoods $U_p, U_q$ of $p, q$, we apply Lemma
\ref{hy.neigh} to obtain an isometry $\varphi\in \Gamma$ with
\[
\varphi(\wh M\backslash U_q)\subset U_p, \ \varphi^{-1}(\wh
M\backslash U_p)\subset U_q.
\]
We show $U_q$ and $\varphi$ satisfy the requirement of
(\ref{cone-e-3.3}).  Let $y\in \M(\infty)\backslash U_q$. We have
\begin{eqnarray*}
\angle_{x_0}\left(\xi, \varphi_{n_k}\varphi(y)\right) &\leq&
 \angle_{x_0}\left(\xi, \varphi_{n_k}(x_0)\right)+ \angle_{x_0}\left(\varphi_{n_k}(x_0),  \varphi_{n_k}\varphi(y)\right)\\
&=& \angle_{x_0}\left(\xi, \varphi_{n_k}(x_0)\right)+ \angle_{\varphi_{n_k}^{-1}x_0}\left(x_0, \varphi(y)\right)\\
&\leq& \angle_{x_0}\left(\xi, \varphi_{n_k}(x_0)\right)+\sup_{u\in
U_p}\angle_{\varphi_{n_k}^{-1}x_0}(x_0, u),
\end{eqnarray*}
where the first quantity goes to zero as $k$ goes to infinity since
$\varphi_{n_k} (x_0)\to \xi$ (as $k\rightarrow \infty$) and the
second quantity goes to zero by (\ref{zero}).  This shows
(\ref{cone-e-3.3}) holds true for $k$ large.
\end{proof}

\

\subsection{Coincidence of two classes of hitting measures}

To show the coincidence of the hitting probabilities at $\M(\infty)$
of the Brownian motion (starting at $x_0$) and $\nu$ random walk on
$\Gamma$, it suffices to show that  for $\overline{\P}$  almost all
trajectories $(c, \omega)$,  $c_{\omega}(t)$ tends to the same limit
as the sequence $\{\varphi_n(c, \omega)\}$.   We need two more
lemmas.

\begin{lem}\label{Jacobi}(\cite[Proposition 4]{EOs})
Let $(M, g)$ be a closed connected Riemannian manifold without focal
points. For any $x_0\in \M$, there exist positive numbers $\alpha$
and $T$ such that for $t\geq T$, the equality
\[
\|J(t)\|\geq \alpha t^{\frac{1}{2}}
\]
is satisfied uniformly by all Jacobi fields which vanish initially
at $x_0$ and have initial covariant derivative of length $1$.
\end{lem}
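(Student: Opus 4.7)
The plan is to reduce the assertion to a uniform lower bound on the smallest singular value of the Jacobi tensor $A_v(t)$ along $\g_v$ (defined by $A_v(0)=0$, $A_v'(0)=I$), since every Jacobi field satisfying the hypotheses has the form $J(t)=A_v(t)e$ with $v\in S_{x_0}\M$ and $e$ a unit vector in $v^\perp$.

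First I would use the compactness of $M$ to fix uniform geometric bounds: the sectional curvature satisfies $|R|\le K$ for some $K$, and since the stable Jacobi tensor $S_v$ is continuous in $v\in S\M$ and descends to the compact quotient $SM$, its norm and the norm of its derivative are uniformly bounded, $\|S_v(t)\|\le C$ and $\|S_v'(t)\|\le C'$ for all $v$ and $t$. The Wronskian identity $A_v'(t)^* S_v(t)-A_v(t)^* S_v'(t)\equiv I$, applied to the unit vector $e$ and combined with Cauchy-Schwarz, yields the pointwise dichotomy $C\|J'(t)\|+C'\|J(t)\|\ge 1$. Combined with the Taylor expansion $A_v(t)=tI+O(t^3)$ (uniform in $v$ by the curvature bound) and the no focal points property that $t\mapsto \|J(t)\|^2$ is non-decreasing, this produces a uniform $T_0>0$ and $\alpha_0>0$ with $\|J(t)\|\ge\alpha_0$ for every $t\ge T_0$.

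The heart of the proof is to upgrade this constant lower bound to growth of order $\sqrt t$. I would do this via the Riccati tensor $U_v(t):=A_v'(t)A_v(t)^{-1}$ along $\g_v$, which is well-defined, symmetric, and positive semidefinite for $t>0$ in the no focal points setting (since $\langle U_v J,J\rangle=\langle J',J\rangle\ge 0$), and which satisfies the matrix Riccati equation $U_v'+U_v^2+R=0$ with the singular initial behavior $U_v(t)\sim (1/t)I$ as $t\to 0^+$. The key claim is that there exist $c\ge 1/2$ and $T_1\ge T_0$, uniform in $v$, such that
\[
\lambda_{\min}(U_v(t))\ \ge\ \frac{c}{t}\qquad\text{for all }t\ge T_1.
\]
Granted this, for $J(t)=A_v(t)e$ we compute
\[
\frac{d}{dt}\|J(t)\|^2\ =\ 2\langle U_v(t)J,J\rangle\ \ge\ \frac{2c}{t}\|J(t)\|^2,
\]
and Gronwall yields $\|J(t)\|^2\ge \|J(T_1)\|^2(t/T_1)^{2c}\ge \alpha^2 t$, completing the argument with $\alpha^2:=\alpha_0^2/T_1$.

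The main obstacle is the Riccati lower bound $\lambda_{\min}(U_v)\ge c/t$. Had $(M,g)$ nonpositive curvature this would be immediate from $R\le 0$, via direct comparison of $\lambda_{\min}'\ge -\lambda_{\min}^2$ with the sharp solution $1/t$; but no focal points allows positive sectional curvatures, and the bare Riccati inequality $\lambda_{\min}'\ge -\lambda_{\min}^2-K$ permits $\lambda_{\min}$ to decay faster than $1/t$. The resolution combines two global consequences of the hypotheses: (i) compactness of $SM$ forces $U_v(t)$ to remain uniformly bounded as $t\to\infty$ (its limit being the unstable Riccati, which is continuous on the compact $SM$), supplying a uniform cap on $h(t):=1/\lambda_{\min}(U_v(t))$; and (ii) whenever $\lambda_{\min}(U_v)$ comes near zero along an eigendirection $e$, the non-decreasing property $\frac{d}{dt}\|A_v(t)e\|^2\ge 0$ combined with the Riccati equation forces $\langle Re,e\rangle\le 0$ there, so that the relevant Riccati inequality for $\lambda_{\min}$ is effectively the benign $\lambda_{\min}'\ge-\lambda_{\min}^2$ which yields exactly the $1/t$ decay. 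Reconciling these two pieces---the local curvature behavior and the global no-focal-points constraint---is the only nontrivial technical step.
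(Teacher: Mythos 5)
The paper offers no proof of this lemma --- it is quoted directly from Eschenburg--O'Sullivan \cite{EOs} --- so your argument must stand on its own, and it does not: everything rests on the claim $\lambda_{\min}(U_{\bf v}(t))\ge c/t$ with $c\ge\tfrac12$, and neither ingredient you offer for it is sound. Ingredient (i) is a non sequitur: a uniform \emph{upper} bound on $U_{\bf v}(t)$ (which does hold for $t\ge 1$, by Riccati comparison using the lower curvature bound) controls $\lambda_{\max}$ and says nothing about $\lambda_{\min}$ from below; it cannot ``cap'' $1/\lambda_{\min}(U_{\bf v}(t))$, which already blows up like $t$ in flat directions. Ingredient (ii) is valid only at a time where $\lambda_{\min}$ is \emph{exactly} zero (there, preservation of positive semidefiniteness forces $\langle Re,e\rangle\le 0$); such times never occur when there are no focal points, and at times where $\lambda_{\min}$ is merely small and positive nothing constrains $\langle Re,e\rangle$, so the honest differential inequality remains $\lambda_{\min}'\ge-\lambda_{\min}^2-K$, which is compatible with decay much faster than $1/t$. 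Note also that your claim is strictly stronger than the lemma --- it is a pointwise lower bound on the second fundamental form of every geodesic sphere --- and I see no way to extract it from ``no focal points plus compact quotient''; your sketch does not.

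The standard route (essentially that of \cite{EOs}) avoids any pointwise Riccati bound and instead proves the uniform estimate $\langle J'(t),J(t)\rangle\ge\delta>0$ for $t\ge1$; integrating $\frac{d}{dt}\|J\|^2=2\langle J',J\rangle$ then gives $\|J(t)\|^2\ge 2\delta(t-1)$, which is the lemma. Concretely, write $A_{\bf v}=D^u_{\bf v}W_{\bf v}$ with $W_{\bf v}(t)=\int_0^t\bigl((D^u_{\bf v})^*D^u_{\bf v}\bigr)^{-1}(s)\,ds$, where $D^u_{\bf v}$ is the unstable Jacobi tensor normalized by $D^u_{\bf v}(0)=I$. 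A direct computation gives $A_{\bf v}^*A_{\bf v}'=W_{\bf v}(D^u_{\bf v})^*U^u_{\bf v}D^u_{\bf v}W_{\bf v}+W_{\bf v}$, and since $U^u_{\bf v}\ge0$ (unstable Jacobi fields have nondecreasing norm in the absence of focal points) and $W_{\bf v}(t)\ge W_{\bf v}(1)$ for $t\ge1$, one gets $\langle J'(t),J(t)\rangle=\langle A_{\bf v}^*A_{\bf v}'e,e\rangle\ge\langle W_{\bf v}(1)e,e\rangle\ge\delta$ with $\delta:=\inf_{\bf v}\lambda_{\min}(W_{\bf v}(1))>0$, by continuity of the unstable tensor and compactness of $SM$. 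Your first step (the uniform constant lower bound via the Wronskian with the stable tensor, the Taylor expansion and monotonicity) is correct, but it is the easy half; the $\sqrt t$ upgrade needs an identity of the above kind, not a $c/t$ lower bound on $U_{\bf v}$.
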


\begin{lem}\label{Guivarch}(see \cite[Theorem 3.14]{B2})  There exists a number $\beta>0$ such that the random sequence $\{\varphi_n\}$ in $\Gamma$  satisfies
\[
\lim\limits_{n\rightarrow \infty} \frac{1}{n} d(x_0, \varphi_n
(x_0))=\beta.
\]
\end{lem}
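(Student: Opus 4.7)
The strategy is to apply Kingman's subadditive ergodic theorem to $a_n := d(x_0, \varphi_n x_0)$ on the Bernoulli space $(\G^{\mathbb N}, \Q)$, and then to extract positivity of the limit from the Lyons--Sullivan comparison with Brownian motion.

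For \emph{subadditivity}, I would write $\varphi_{n+m} = \varphi_n \cdot (\varphi_n^{-1}\varphi_{n+m})$ and use that $\G$ acts by isometries on $(\M, \wt g)$ to get
\[
 d(x_0, \varphi_{n+m} x_0) \;\leq\; d(x_0, \varphi_n x_0) + d(x_0, \varphi_n^{-1}\varphi_{n+m}\, x_0).
\]
Since the $\nu$-increments $\psi_k := \varphi_{k-1}^{-1}\varphi_k$ are i.i.d.\ with law $\nu$, the process $m \mapsto \varphi_n^{-1}\varphi_{n+m}$ has the same distribution as $(\varphi_m)_m$ and is independent of $\sigma(\varphi_1, \ldots, \varphi_n)$; writing $T$ for the shift on the i.i.d.\ increments, this reads $a_{n+m} \leq a_n + a_m \circ T^n$. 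Integrability $\E_\Q[a_1] < \infty$ follows from Proposition~\ref{Lyons-S}(iii): the exponential tail in (iii), combined with the finite $\overline{\P}$-expectation of $T_1$ and standard Brownian first-moment bounds on the compact quotient, yields
\[
 \E_\Q[a_1] \;\leq\; \E_{\overline{\P}}\!\left[d(x_0, c_\omega(T_1))\right] + \sum_{k \geq 1} k\,\delta^k \;<\; \infty.
\]
Kingman's theorem then gives $\Q$-a.s.\ and $L^1$ convergence $a_n/n \to \beta := \inf_n \E_\Q[a_n]/n \in [0, \infty)$, constant by ergodicity of the Bernoulli shift.

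The \emph{main obstacle} is to rule out $\beta = 0$. My plan is again to lean on Proposition~\ref{Lyons-S}(iii): the i.i.d.\ waiting times $T_{k+1} - T_k$ have a strictly positive mean $\tau$ (they dominate exit times of Brownian motion from a compact fundamental domain of $\G$), so Birkhoff gives $T_n/n \to \tau > 0$ almost surely, while (iii) forces $\E_{\overline\P}\!\left[|a_n - d(x_0, c_\omega(T_n))|\right] = O(1)$. Therefore
\[
 \frac{a_n}{n} \;=\; \frac{T_n}{n} \cdot \frac{d(x_0, c_\omega(T_n))}{T_n} + o(1) \;\longrightarrow\; \tau\,\ell,
\]
where $\ell$ is the linear drift of Brownian motion on $\M$. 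It then remains to verify $\ell > 0$: since $(M,g)$ is rank one without focal points, $M$ is not flat, so the stochastic entropy $h$ is positive, and the inequality $h \leq \ell v$ from~\eqref{ineq} forces $\ell > 0$. The delicate input is therefore positivity of $h$, which in the no focal points setting can be extracted from the non-amenability of $\G$ (established earlier in this section) along the lines of~\cite{B2}.
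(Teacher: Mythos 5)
The paper does not actually prove this lemma: it is quoted verbatim from Ballmann \cite{B2}, where it is Guivarc'h's law of large numbers \cite{Gu} for the discrete walk $(\G,\nu)$. That proof has the same first half as yours --- existence of the limit by Kingman applied to the subadditive cocycle $a_n=d(x_0,\varphi_n x_0)$, using that the increments $\varphi_{k-1}^{-1}\varphi_k$ are i.i.d.\ of law $\nu$ under $\Q$ and that the Lyons--Sullivan measure has finite first moment --- but it obtains positivity intrinsically: $\G$ is non-amenable and $\nu$ charges all of $\G$, so by Furstenberg the Poisson boundary of $(\G,\nu)$ is non-trivial, hence the Avez entropy $\lim_n H(\nu^{*n})/n$ is positive, and Guivarc'h's inequality (entropy $\le$ drift $\times$ growth, with the growth of the cocompact $\G$ finite) forces $\beta>0$. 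Your positivity step instead detours through the Brownian motion via $\beta=\tau\ell$ and $\ell>0$; this is a genuinely different (and also viable) route, and the identity $\beta=\tau\ell$ is true, but it imports extra structure from the discretization that the intrinsic argument never needs.

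That detour is where the two real soft spots lie. First, the increments $T_{k+1}-T_k$ of the Lyons--Sullivan stopping times are \emph{not} i.i.d.: in the construction of \cite{LS} the law of $T_{k+1}-T_k$ depends on the position $c_\omega(T_k)$, and only modulo $\G$ is there a renewal/Markov structure. So ``Birkhoff for i.i.d.\ waiting times'' does not apply as stated; one must either use the stationarity of the induced process on the compact quotient together with the uniform bounds $0<\tau_-\le \E\,[\,T_{k+1}-T_k\mid \mathcal{F}_{T_k}]\le\tau_+<\infty$ to get $T_n/n\to\tau\in(0,\infty)$, or observe that for positivity of $\beta$ (existence already being settled by Kingman) a bound $\liminf_n T_n/n>0$ suffices. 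You should also state that $d(x_0,c_\omega(t))/t\to\ell$ almost surely (again Kingman, for the Brownian subadditive cocycle), since the definition of $\ell$ in the introduction is only in the mean. Second, ``$M$ is not flat, so $h>0$'' is not a valid implication --- compact non-flat manifolds with amenable fundamental group have $h=0$ --- but the chain you then sketch is the correct one and closes the gap: $\G$ non-amenable (subsection 3.4) implies $\M$ carries non-constant bounded harmonic functions by \cite{LS}, hence $h>0$ by Kaimanovich's criterion \cite{K1}, hence $\ell>0$ by $h\le\ell v$. With these two repairs your argument is complete, though the purely discrete Guivarc'h argument is shorter.
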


\

\begin{theo}\label{coin} The random Brownian path converges at $\M(\infty)$. The hitting measure at $\M(\infty)$ coincides with the hitting measure at $\M (\infty)$ of the $\nu$-random walk on $\Gamma$.
\end{theo}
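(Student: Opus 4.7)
The plan is to exploit the coupling provided by Proposition \ref{Lyons-S} and argue that, for $\overline{\P}$-almost every $(c,\omega)$, the Brownian trajectory $t\mapsto c_\omega(t)$ and the discretized sequence $n\mapsto \varphi_n(c,\omega)x_0$ converge to the same point in $\M(\infty)$ in the cone topology. Once this is shown, both conclusions of the theorem follow at once: projecting $\overline{\P}$ onto $W$ gives the hitting distribution of the Brownian motion (via Proposition \ref{Lyons-S}(i)), projecting onto $\Gamma^{\mathbb N}$ gives the hitting distribution of the random walk (via Proposition \ref{Lyons-S}(ii) and Theorem \ref{sta.mea}), and if the two limits coincide almost surely then so do the push-forwards.

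My first step is to translate the exponential estimate in Proposition \ref{Lyons-S}(iii) into an almost sure bound. Setting $k_n := C\log n$ with $C$ large enough that $\delta^{k_n}\le n^{-2}$, a Borel--Cantelli argument yields a random $N(c,\omega)$ such that for all $n\ge N$,
\[
\max_{T_n\le t\le T_{n+1}} d\bigl(c_\omega(t),\,\varphi_n(c,\omega)x_0\bigr)\le C\log n.
\]
Combined with Lemma \ref{Guivarch}, which gives $d(x_0,\varphi_n x_0)\sim \beta n$ almost surely, this already forces $d(x_0,c_\omega(t))\to\infty$, so $c_\omega(t)$ leaves every compact subset of $\M$. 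It also shows $|T_{n+1}-T_n|$ is not too large (here the $T_n$ are known to grow linearly in $n$ from the Lyons--Sullivan construction), so every sufficiently large $t$ lies in some $[T_n,T_{n+1}]$ with $n\to\infty$.

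The central step is to upgrade this metric closeness to angular closeness at $x_0$, and this is where Lemma \ref{Jacobi} enters and where I expect the main technical obstacle. Let $\theta_n$ denote the angle at $x_0$ between the initial vectors of the two geodesics $\gamma_{x_0,\varphi_n x_0}$ and $\gamma_{x_0,c_\omega(t)}$, and set $L:=d(x_0,\varphi_n x_0)\asymp \beta n$. After comparing the two endpoints at the same parameter value $L$ (absorbing the difference $|L-d(x_0,c_\omega(t))|\le C\log n$ in the triangle inequality), we obtain
\[
d\bigl(\gamma_{x_0,\varphi_n x_0}(L),\,\gamma_{x_0,c_\omega(t)}(L)\bigr)\le 2C\log n.
\]
Consider the geodesic variation through $x_0$ joining these two geodesics. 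The associated variation vector field is a Jacobi field $J$ with $J(0)=0$ and $\|J'(0)\|\le \theta_n$. In the no-focal-points setting Lemma \ref{Jacobi} provides the uniform lower bound $\|J(L)\|\ge \alpha\,\theta_n\,L^{1/2}$ for $L$ large (by linearity, applied to the normalised Jacobi field $J/\|J'(0)\|$), and a standard comparison for divergence of geodesics with common origin then gives $d(\gamma_{x_0,\varphi_n x_0}(L),\gamma_{x_0,c_\omega(t)}(L))\gtrsim \alpha\,\theta_n\, L^{1/2}$. Hence $\theta_n = O(\log n/\sqrt n)\to 0$.

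The subtle point in this last step is that Lemma \ref{Jacobi} controls a single Jacobi field, not directly the distance between two geodesics with the same initial point, so one must justify that the distance between $\gamma_{x_0,\varphi_n x_0}(L)$ and $\gamma_{x_0,c_\omega(t)}(L)$ really is bounded below by a quantity of order $\theta_n L^{1/2}$; in the no-focal-points category this follows from the monotonicity and convexity properties of $\|J\|$ for Jacobi fields vanishing at $0$ together with the uniform estimate of Lemma \ref{Jacobi}. Once this is in hand, combining the convergence $\varphi_n x_0\to\xi(c,\omega)$ from Theorem \ref{sta.mea} with the angular bound $\theta_n\to 0$ and the fact $d(x_0,c_\omega(t))\to\infty$ gives $c_\omega(t)\to \xi(c,\omega)$ in the cone topology, completing the proof.
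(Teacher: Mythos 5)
Your proposal is correct and follows essentially the same route as the paper: a Borel--Cantelli argument from Proposition \ref{Lyons-S}(iii) to control $\max_{T_n<t<T_{n+1}}d(c_\omega(t),\varphi_n x_0)$ (the paper gets an $o(\sqrt n)$ bound rather than your sharper $O(\log n)$, which suffices), Lemma \ref{Guivarch} to force escape to infinity, and Lemma \ref{Jacobi} to convert metric closeness into angular closeness at $x_0$. The ``subtle point'' you flag --- passing from the Jacobi-field growth estimate to a lower bound on the divergence of two geodesics issuing from $x_0$ --- is handled in the paper with exactly the same appeal to \cite{EOs} (whose Proposition 4 is stated there precisely to yield divergence of geodesics), the only cosmetic difference being that the paper phrases the angular step as a proof by contradiction rather than a direct $O(\log n/\sqrt n)$ estimate.
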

\begin{proof}
First, we have for $\overline{\P}$ almost all trajectory $(c,
\omega)$ that
\begin{equation}\label{sqrt-increase}
\varlimsup\limits_{n\rightarrow \infty} \frac{1}{\sqrt{n}}
\max\limits_{T_n < t<T_{n+1}} d(c_{\omega}(t), \varphi_n(c, \omega)
x_0)=0.
\end{equation}
To see this, for any $\epsilon>0$ let
$$A_{n, \epsilon}=\{(c, \omega):\ \  \max\limits_{T_n< t<T_{n+1}} d(c_{\omega}(t), \varphi_n(c, \omega) x_0)>\epsilon \sqrt{n}\}.$$  Then  by  iii) of Proposition \ref{Lyons-S}, we have\[
\sum\limits_{n=0}^{+\infty} \overline{\P}(A_{n, \epsilon})\leq
\sum\limits_{n=0}^{+\infty} \delta^{\epsilon \sqrt{n}},
\]
which is finite since $\sum_{n=0}^{+\infty} \delta^{\epsilon
\sqrt{n}}\leq 1+2\sum_{l=1}^{+\infty}(l+1)\delta^{\epsilon
l}<+\infty$. So we have by Borel-Cantelli lemma that $\cap_{m\in
\Bbb N}\cup_{n\geq m}A_{n, \epsilon}$ has $\overline{\P}$
probability $0$ and (\ref{sqrt-increase}) follows since $\epsilon>0$
is arbitrary.

By Lemma \ref{Guivarch}, we have for $\overline{\P}$ almost all $(c,
\omega)\in W\times \Omega$,
\[
\varliminf_{n\rightarrow \infty}\frac{1}{n} d(x_0, \varphi_n(c,
\omega) x_0)=\beta>0
\]
and hence we have by (\ref{sqrt-increase}) that $d(x_0,
c_{\omega}(t))\rightarrow \infty$ as $t\rightarrow \infty$ as well.

Finally, Theorem \ref{coin} directly follows from
\begin{equation}\label{angle}
\max\limits_{T_n< t<T_{n+1}}  \angle_{x_0}(c_{\omega}(t),
\varphi_n(c, \omega) x_0)\rightarrow 0, \ \mbox{as}\ \  n\rightarrow
\infty.
\end{equation}
Suppose (\ref{angle}) doesn't hold $\overline{\P}$  almost
everywhere. For a set of $\overline{\P}$ positive measure of  $(c,
\omega)$, there is a $\theta>0$ and infinitely many $n$ such that
\begin{equation}\label{eq-3.6}
\max\limits_{T_n<  t<T_{n+1}}  \angle_{x_0}(c_{\omega}(t),
\varphi_n(c, \omega) x_0)>\theta.
\end{equation}
For any $0<\epsilon<\frac{1}{8}\beta$,  for $\overline{\P}$ almost
all $(c, \omega)$, there is  $N>\max\{4T/\beta,  T\}$ (where $T$ is
from Lemma \ref{Jacobi}) such that for  $n>N$,  we have
\begin{eqnarray}\label{eq-3.7}
d_1(n)&=&  \max\limits_{T_n< t<T_{n+1}}  d(c_{\omega}(t), \varphi_n(c, \omega) x_0)<\epsilon \sqrt{n},\\
d_2(n)&=&  d(x_0, \varphi_n(c, \omega) (x_0))>\frac{1}{2}\beta
n.\label{eq-3.8}
\end{eqnarray}
Take $(c, \omega)$ so that (\ref{eq-3.6}), (\ref{eq-3.7}) and
(\ref{eq-3.8}) hold true. For $n>N$ and $t\in (T_n, T_{n+1})$,
consider the geodesics $\gamma, \wt\g$ which start at $x_0$ and
point at $c_{\omega}(t)$ and $\varphi_n(c, \omega) x_0$,
respectively.  On the one hand,  we have by (\ref{eq-3.7}) and the
triangle inequality  that
\begin{equation}\label{e-contra}
d(c_{\omega}(t), \varphi_n(c, \omega) x_0)\geq d(\g(d_2(n)), \wt\g
(d_2(n)))-\epsilon \sqrt{n}.
\end{equation}
On the other hand, since $d_2(n)>\beta n/2>T$, we have by Lemma
\ref{Jacobi} and (\ref{eq-3.6}) that
\begin{eqnarray*}
d(\g (d_2(n)), \wt\g (d_2(n)))&>& \theta \alpha d_2(n)^{\frac{1}{2}}\\
&>& \theta\alpha (\frac{1}{2}\beta n)^{\frac{1}{2}}\\
&>& 2\epsilon \sqrt{n},
\end{eqnarray*}
if we choose  $\epsilon<2^{-1} \theta \alpha
(\frac{1}{2}\beta)^{\frac{1}{2}}$. This, together with
(\ref{e-contra}),  contradicts  (\ref{eq-3.7}). \end{proof}

\

\subsection{Proof of Theorem \ref{har.mea}}

\begin{proof}
Let $m$ be a $\W$ harmonic probability measure on $SM$. Then, there
is a unique $\Gamma$-invariant measure $\wt m$ on $S\M$ which
coincides with $m$ locally. Seen as a measure on $\M \times \MM$, we
claim that $\wt m$ is given, for any continuous $f$ with compact
support, by:
\begin{equation}\label{harm.meas} \int f(x, \xi )\  d\wt m (x,\xi) \; = \; \int _{\M} \left( \int _{\MM} f(x,\xi ) \ d m _x (\xi) \right)\ dx, \end{equation}
where $m_x$ is the hitting measure of Brownian motion at
$\M(\infty)$ starting at $x$ and  $dx$ is proportional to the
Riemannian volume on $\M$.

Firstly,  there is  a family of probability measures $x\mapsto m_x$
on $\MM$  with (\ref{harm.meas}) holds  such that, for all $g $
continuous on $\MM$, $x\mapsto \int g (\xi )\ d m_x (\xi)  $ is a
harmonic function on $\M$.  This follows from \cite{Ga}. On the one
hand, the measure $\wt m$ projects on $\M$ as a $\Gamma$-invariant
measure satisfying $\int \D f \ dm  = 0 $. The projection of $\wt m
$ on $\M$ is proportional to Volume and  formula (\ref{harm.meas})
is the desintegration formula. On the other hand, if one projects
$\wt m$ first on $\MM$, there is a probability measure ${\bf m}$ on
$\MM$ such that $$\int f(x,\xi) \ d\wt m (x,\xi) \;= \int_{\MM}
\left( \int_{\M} f(x,\xi) \ dm_\xi (dx) \right)\  d{\bf m} (\xi).$$
For ${\bf m} $-a.e. $\xi $, the measure $m_\xi $ is a harmonic
measure on $\M$; therefore, for ${\bf m}$-a.e. $\xi $,  there is a
positive harmonic function $k_\xi (x) $ such that $m_\xi = k_\xi (x)
{\textrm{Vol }}$.  Comparing the two expressions for $\int f \ d\wt
m$, we see that, up to a normalizing constant, the measure $m_x $ is
given by $$ m _x (d\xi) \; = \; k_\xi (x) {\bf m}(d\xi).$$ We
normalize ${\bf m}$ in such a way that $k_\xi(x_0)=1$ for almost all
$\xi$. Then,  $x \mapsto \int _{\MM}  g (\xi )\  d m_x (\xi ) $ is
indeed a harmonic function.

Next,  for any  $x_0 \in \M$, let $\nu$ be the corresponding
Lyons-Sullivan measure on $\Gamma$.  Since for all $g$ continuous on
$\M (\infty)$, $x\mapsto m_x(g)$ is a harmonic function and that
$m_{\varphi x_0}=\varphi_{*}m_{x_0}$ for $\varphi\in \Gamma$, it
follows that the measure $m_{x_0}$ is a stationary measure for
$\nu$, i.e.
\[
m_{x_0}=\sum_{\varphi\in \Gamma}\varphi_{*} m_{x_0} \nu(\varphi).
\]
So we conclude from Theorem \ref{sta.mea} and Theorem \ref{coin}
that $m_{x_0}$ is the hitting probability at $\M (\infty)$ of the
Brownian motion starting at $x_0$. Since $x_0$ was arbitrary in the
above reasoning, we have the desired expression of the lift of $m$
as in (\ref{harm.meas}).

Finally, we have by the solvability of  Dirichlet problem (Theorem
\ref{Diri}) that  each $m_{x}$ is fully supported on $\M(\infty)$.
It follows that the unique $\W$ harmonic measure $m$ is fully
supported on $SM$.
\end{proof}

\section{A linear drift characterization of local symmetry}

For Theorem \ref{rankone-thm}, it remains  to show Proposition
\ref{as.har}.  Consider the action of $\Gamma$ on
$\widehat{M}=\M\cup \M (\infty)$.  Let  $X_{M}$ be  the quotient of
the space $\M\times \widehat M$ by the diagonal action of $\Gamma$.
To each $\xi\in \widehat M$ is associated the projection of
$\widehat W_{\xi}$ of $\M\times \{\xi\}$ in $X_{M}$.   As a subgroup
of $\Gamma$, the stabilizer $\Gamma_{\xi}$ of the point $\xi$ acts
discretely on $\M$ and the space $\widehat W_{\xi}$ is homeomorphic
to the quotient of $\M$ by $\Gamma_{\xi}$. Put on each $\widehat
W_{\xi}$ the smooth structure and the metric inherited from $\M$.
The manifold $\widehat W_{\xi}$ and its metric vary continuously on
$X_{M}$. The collection of all $\widehat W_{\xi}, \xi\in \widehat M$
form a continuous lamination $\widehat\W$ of $X_{M}$ with leaves
which are manifolds locally modeled on $\M$.  Denote by
$\Delta^{\widehat\W}$ the laminated Laplace operator acting on
functions which are smooth along the leaves of the lamination.  A
Borel  measure $m$  on $X_{M}$ is called harmonic (with respect to
$\widehat \W$) if it satisfies, for all $f$ for which it makes
sense,
\[
\int \Delta^{\widehat \W} f\ dm=0.
\]
Let $\widehat m$  be  the $\Gamma$ invariant extension of a harmonic
measure  $m$ on $\M\times \widehat M$.  There exists a finite
measure $\widehat {\bf m}$ on $\widehat M$ (\cite{Ga}) and, for
$\widehat{\bf m}$-almost every $\xi$, a positive harmonic function
$k_{\xi}(x)$ with $k_{\xi}(x_0)=1$ such that the measure $m$ can be
written as
 \[
 \widehat m=k_{\xi}(x)(dx\times \widehat {\bf m} (d \xi)).
 \]
 The set of harmonic probability measures is a weak* compact nonempty set of measures on $X_{M}$ (\cite{Ga}).  A harmonic probability measure $m$ is called ergodic if it is extremal among harmonic probability measures.   As a corollary of the results in \cite{L4} and \cite[Proposition 4.2]{Zi2}, we have the following proposition.

\begin{prop}\label{har.busemann} Let $(M, g)$ be a closed connected Riemannian manifold  without focal points and $\ell^2=h$. Then, there exists an ergodic harmonic probability measure $m$ on $X_M$ such that $\Delta  b_{\xi}=\ell$ for $\widehat{\bf m}$ almost all $\xi\in  \widehat{M}$ (see below for definitions).\end{prop}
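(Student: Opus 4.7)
My strategy is to combine the Garnett--Krein--Milman existence of ergodic $\wh\W$-harmonic probability measures on $X_M$ with the integral representations of $\ell$ and $h$ supplied by \cite{L4} and \cite[Proposition 4.2]{Zi2}, and to interpret the equality $\ell^2=h$ as the equality case in a pointwise Cauchy--Schwarz inequality.

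First, the set of $\wh\W$-harmonic probability measures on $X_M$ is a nonempty, weak-$*$ compact, convex subset of the space of Borel probability measures by Garnett's theory, and therefore has extreme points by Krein--Milman; any such extreme point is ergodic. Pick one such ergodic harmonic probability measure $m$ and write its $\Gamma$-invariant lift as
\[\wh m \;=\; k_\xi(x)\,(dx\times \wh{\bf m}(d\xi))\]
in the notation of the paragraph preceding the proposition, normalized so that $k_\xi(x_0)=1$ for $\wh{\bf m}$-a.e.\ $\xi$.

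Second, I invoke from \cite{L4} and \cite[Proposition 4.2]{Zi2} the Kaimanovich-style identities, valid on any ergodic $\wh\W$-harmonic probability measure in the no-focal-points setting:
\[\ell \;=\; -\int \langle \nabla b_\xi,\,\nabla \log k_\xi\rangle\,dm,\qquad h \;=\; \int |\nabla \log k_\xi|^2\,dm,\]
where $b_\xi$ is the Busemann function normalized by $b_\xi(x_0)=0$. Since $|\nabla b_\xi|\equiv 1$, Cauchy--Schwarz applied to the probability measure $m$ yields
\[\ell^2 \;=\; \left(\int \langle \nabla b_\xi,\nabla \log k_\xi\rangle\,dm\right)^2 \;\le\; \int |\nabla \log k_\xi|^2\,dm \;=\; h,\]
with equality if and only if $\nabla \log k_\xi = -\ell\,\nabla b_\xi$ holds $\wh m$-almost everywhere.

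Third, under $\ell^2 = h$ the equality case of Cauchy--Schwarz forces $\nabla \log k_\xi(x) = -\ell\,\nabla b_\xi(x)$ for $\wh m$-a.e.\ $(x,\xi)$. Both sides are continuous in $x$ (since $k_\xi$ is smooth and harmonic and $b_\xi$ is $C^2$), and the density $k_\xi(x)$ in the disintegration of $\wh m$ is positive, so for $\wh{\bf m}$-a.e.\ $\xi$ this identity holds \emph{pointwise} in $x\in \M$. Taking the divergence and using that $k_\xi$ is harmonic (so $\Delta \log k_\xi = -|\nabla \log k_\xi|^2 = -\ell^2$) gives $-\ell^2 = -\ell\,\Delta b_\xi$, i.e., $\Delta b_\xi\equiv \ell$ on $\M$, for $\wh{\bf m}$-a.e.\ $\xi$, which is the claim. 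The main obstacle is verifying that the two integral representations of $\ell$ and $h$ are indeed what \cite{L4} and \cite[Proposition 4.2]{Zi2} furnish in the no-focal-points framework, since the classical Kaimanovich derivations assume strict negative curvature; once those identities are in hand, the equality-case analysis and divergence computation are elementary.
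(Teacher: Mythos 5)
Your overall strategy (Cauchy--Schwarz on the Kaimanovich-type integral formulas, then the equality case forcing $\nabla\log k_\xi=-\ell\nabla b_\xi$ and a divergence computation) is indeed the engine behind the result being cited, but there is a genuine gap in the first two steps. The identities
\[
\ell=-\int\langle\nabla b_\xi,\nabla\log k_\xi\rangle\,dm,\qquad h=\int|\nabla\log k_\xi|^2\,dm
\]
do \emph{not} hold for an arbitrary ergodic $\wh\W$-harmonic probability measure on $X_M$; they are tied to a specific measure built from the Brownian motion, namely (an ergodic component of) a weak-$*$ limit of the heat-kernel averages $\frac{1}{T}\int_0^T\wh m_t\,dt$ with $\wh m_t=\int p(t,x,y)\,dy\,\frac{dx}{\mathrm{Vol}\,M}$, as in \cite[p.~720]{L4} and in the proof of Proposition \ref{as.har}. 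The lamination $\wh\W$ lives over all of $\wh M=\M\cup\M(\infty)$ and a priori carries many harmonic measures (uniqueness is only proved later, for the stable foliation, in Theorem \ref{har.mea}, and only in the rank one case -- the present proposition makes no rank assumption); for a generic extreme point of the simplex of harmonic measures the right-hand sides above need not compute the linear drift and the stochastic entropy of the Brownian motion. Since the proposition asserts the \emph{existence} of a suitable $m$, the argument must produce the right one; ``pick any extreme point'' proves a stronger statement that is false in general. Establishing the (in)equalities relating $\ell$, $h$ and these integrals for the correct measure is precisely the content of \cite{L4}, so your proof as written is essentially assuming the theorem you need.

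The paper's route is shorter and avoids redoing this analysis: it quotes \cite{L4}, which already yields, under $\ell^2=h$, an ergodic harmonic measure $\breve m$ on the lamination over the \emph{Busemann} compactification $\breve M$ with $\Delta\breve\xi=\ell$ for $\breve{\bf m}$-a.e.\ $\breve\xi$ (Laplacian in the distribution sense, since Busemann-boundary functions are a priori only Lipschitz). The no-focal-points hypothesis then enters through \cite[Proposition 4.2]{Zi2}: the homeomorphism $\breve\pi:\breve M\to\wh M$ identifies each $\breve\xi$ with a geometric Busemann function $b_{\breve\pi\breve\xi}$, which is $C^2$ by Eschenburg, so the distributional identity becomes the classical one and $m$ is the push-forward of $\breve m$. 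If you want to keep your self-contained approach, you must (i) construct $m$ from the heat-kernel limits before extracting an ergodic component, (ii) justify the integral formulas for that measure (this is where \cite{L4} does real work), and (iii) handle the degenerate case $\ell=0$ separately, since your final step divides by $\ell$. Your passage from an a.e.\ identity to a pointwise one via continuity in $x$, and the computation $\Delta\log k_\xi=-|\nabla\log k_\xi|^2=-\ell^2=-\ell\,\Delta b_\xi$, are fine once those points are secured.
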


Indeed,  let $(M, g)$ be a closed connected Riemannian manifold.
One can consider the Busemann compactification of $\M$ as follows.
Fix a point $x_0\in \M$ as a reference point.  For each point $x\in
\M$, define a function $ b_{x}(z)$ on $\M$ by
\[
 b_{x}(z):= d(x, z)-d(x, x_0),\ \forall z\in \M.
\]
The assignment $x\mapsto  b_x$ is continuous, one-to-one and takes
values in a relatively compact set of functions for the topology of
uniform convergence on compact subsets of $\M$. The Busemann
compactification $ \breve M$ of $\M$ is the closure of $\M$ for that
topology.  The space $\breve M$ is a compact separable space.   The
Busemann boundary $\breve M (\infty):=\breve M\backslash \M$ is
compact  (\cite[Proposition 1]{LW}) and is  made of $1$-Lipschitz
continuous functions  $\breve \xi$ on $\M$ such that $\breve \xi
(x_0)=0$.     It is shown in \cite{L4} that  if $\ell^2=h$, then
there exists an ergodic harmonic probability measure $\breve m$
corresponding to the quotient  of the space $\M\times \breve M$ by
the diagonal action of  $\Gamma$  such that $\Delta \breve \xi=\ell$
for $\breve{\bf m}$ almost all $\breve \xi\in  \breve{M}$. (Here, by
$\Delta$, we mean the  Laplacian in the distribution sense.)  Note
that in case $M$ has no focal points, there exists  a homemorphism
$\breve \pi: \breve M\mapsto \M $ (\cite[Proposition 4.2]{Zi2})
which satisfies $\breve \xi=b_{\breve \pi \breve \xi}$ for $\breve
\xi\in \breve M$. Proposition \ref{har.busemann}  follows
immediately  by letting $m$ be the projection of $\breve m$ on
$X_{M}$.

\begin{proof}[Proof of Proposition \ref{as.har}] Let $(M, g)$ be a closed connected rank one Riemannian manifold without focal points.  Assume $\ell^2=h$.  One can obtain a  harmonic measure  $m$ satisfying Proposition \ref{har.busemann}  by describing its $\Gamma$ invariant extension $\widehat{m}$ (\cite[p. 720]{L4}).   Set
\begin{equation}\label{eq-m_t}
\widehat m_t:=\int_{\M}  p (t, x, y) dy \ \frac{dx}{\mbox{Vol} M},
\end{equation}
where $p(t, x, y),  t \in \R_+, x,y \in \M $  is the heat kernel on
$\M$.  Then pick up $\widehat m$ as any ergodic decomposition of a
limit point of $\frac{1}{T}\int_{0}^{T} \widehat m_t\ dt$. Its
corresponding harmonic measure $m$ on $X_{M}$ satisfies Proposition
\ref{har.busemann} as required.

Note that the random Brownian path converges at $\M(\infty)$ by
Theorem \ref{coin}.  Hence we conclude from (\ref{eq-m_t}) that
$\widehat m$ is actually pushed to be supported on $\M\times \M
(\infty)$. Thus,  $m$ is one (and hence is the only one by Theorem
\ref{har.mea}) harmonic measure for the stable foliation.   We also
conclude from Theorem \ref{har.mea} that  $m$ is fully supported on
$SM$.

Now  we have $\widehat{\bf m}$ is fully supported on $\M (\infty)$.
Moreover, by Proposition  \ref{har.busemann}, we have for
$\widehat{\bf m}$-a.e. $\xi$,  $\Delta b_{\xi} =\ell$.   Recall from
section \ref{sec-Busemann} that the map $x\mapsto b_{\xi}(x)$ is of
class $C^2$ and $\Delta_x b_\xi$ depends continuously on $\xi\in \M
(\infty)$.  We conclude  that  the Laplacian of the Busemann
function $B$ on $SM$ is constant $\ell$.  This shows   $(\M,
\wt{g})$ is asymptotically harmonic.
\end{proof}

\begin{remark} One can  also show that any $m$ in Proposition \ref{har.busemann}  is fully supported on $SM$ without using the explicit description of the unique harmonic measure for the stable foliation  in Theorem \ref{har.mea}.   Yet, we prefer to use it since it  is independent of the assumption of $\ell^2=h$, has its own interest in Dirichlet problem and might be useful in a further study of Martin boundary in no focal points case.  Here is one approach suggested by Zimmer.
Any such $m$  also satisfies (\cite{L4})
\begin{equation*}\label{equ-led-grad}
\nabla_x \ln k_{\xi}(x)=-\ell \nabla_x b_{\xi}, \ \mbox{for} \
\widehat m\mbox{-a.e.} \ (x, \xi)\in \M\times \widehat M.
\end{equation*}
So $\widehat{\bf m}$-a.e.  $\xi$ must belong to $\M(\infty)$ because
of  the non-differentiability of $b_{\xi}$ at $\xi\in \M$ (cf.
\cite[Theorem 5.2]{Zi2}). Thus, $\widehat  m$ is supported on
$\M\times \M (\infty)$.  By using  the density of hyperbolic points
in $\M (\infty)$ (Lemma \ref{hy.point}),  Lemma  \ref{hy.neigh}  and
a similar argument as in \cite[Lemma 4.1]{Kn2},  one can obtain that
each $\widehat  m_x, x\in \M$,  of the disintegration of $\widehat
m$ is fully supported on $\M(\infty)$.  It follows that   $ m$ is
fully supported on $SM$. \end{remark}

\

\small{{\bf{Acknowledgments}} The first author was partially
supported by NSF: DMS 0811127. The second author was partially
supported by NSFC: 10901007 and China Scholarship Council; she would
also like to thank the department of mathematics of the university
of Notre Dame for hospitality during her stay. }

\small


\begin{thebibliography}{99}

 \bibitem[{\bf AS}]{AS} M. Anderson and R. Schoen, Positive harmonic functions on compact manifolds of negative curvature, \emph{Ann. Math.}  {\bf 121} (1985), 429--461.


%\bibitem[{\bf B1}]{B1} W. Ballmann, Nonpositively curved manifolds of higher rank, {\em Ann. Math}, {\bf 122} (1985), 597--609.

\bibitem[{\bf B2}]{B2} W. Ballmann, On the Dirichlet problem at infinity for manifolds of nonpositive curvature, {\em Forum Mathematicum}, {\bf 1} (1989), 201--213.

\bibitem[{\bf B3}]{B3} W. Ballmann, Lectures on spaces of nonpositive curvature, {\em DMV Seminar},  {\bf 25} (1995).


\bibitem[{\bf BE}]{BE}  W. Ballmann and P. Eberlein,   Fundamental groups of manifolds of nonpositive curvature, \emph{J. Differential Geom.}   {\bf 25}  (1987), no. 1, 1--22.

\bibitem[\bf BCG1]{BeCG91} G. Besson, G. Courtois and  S. Gallot, Volume et entropie minimale des espaces sym\'etriques, \emph{Invent. math.}  {\bf 103} (1991), 417--445.



\bibitem[{\bf  BCG2}]{BCG95} G. Besson, G. Courtois and S. Gallot, Entropies et rigidit\'es des espaces localement sym\'etriques de courbure strictement n\'egative, {\em Geom. Func. Anal.}   {\bf 5} (1995), 731--799.

\bibitem[{\bf BFL}]{BFL} Y. Benoist, P. Foulon and F. Labourie,  Flots d'Anosov \`a distributions stables et instables diff\'erentiables, {\em  J. Amer. Math. Soc. }  {\bf 5} (1992), 33--74.

%\bibitem[\bf Br]{Br81} R. Brooks,
%The fundamental group and the spectrum of the Laplacian.
%\emph{Comment. Math. Helv.}  {\bf 56} (1981), no. 4, 581--598.



%\bibitem[{\bf BS}]{BS} K. Burns and R. Spatzier, Manifolds of nonpositive curvatuer and their buildings, {\em Publications math. IHES}, {\bf 65} (1987),  35--59.


% \bibitem[{\bf CFL}]{CFL} J. G.  Cao,  H. J. Fan  and  F. Ledrappier,  Martin points on open manifolds of non-positive curvature.
%\emph{Trans. Amer. Math. Soc.}  {\bf 359}  (2007), no. 12,
%5697--5723.



%\bibitem[{\bf E1}]{E1} P. Eberlein, Geodesic flows on negatively curved manifolds, II, {\em Transactions Amer. math. Soc.}, {\bf 178} (1973), 57--82.

\bibitem[\bf EO]{EO} P. Eberlein and  B. O'Neill, Visibility manifolds, \emph{Pacific J. Math.}  {\bf 46}  (1973), 45--109.




\bibitem[\bf E]{E}  J.-H. Eschenburg,  Horospheres and the stable part of the geodesic flow, \emph{Math. Z. }   {\bf 153} (1977), no. 3, 237--251.

\bibitem[\bf EOs]{EOs} J.-H.  Eschenburg and J.-J.  O'Sullivan,
Growth of Jacobi fields and divergence of geodesics, \emph{Math. Z.}
{\bf 150} (1976), no. 3, 221--237.

\bibitem[{\bf  FL}]{FL}   P. Foulon and F. Labourie,  Sur les vari\'et\'es compactes asymptotiquement harmoniques, {\em  Invent. Math. }  {\bf 109} (1992), 97--111.


\bibitem[{\bf FM}]{FM} A.  Freire and  R. Ma\~{n}\'{e},  On the entropy of the geodesic flow in manifolds without conjugate points,  \emph{Invent. Math.}  {\bf 69} (1982), no. 3, 375--392.



\bibitem[{\bf Fu}]{Fu}  H. Furstenberg,
Boundary theory and stochastic processes on homogeneous spaces.
\emph{Harmonic analysis on homogeneous spaces } (Proc. Sympos. Pure
Math., Vol. XXVI, Williams Coll., Williamstown, Mass., 1972),
193--229. Amer. Math. Soc., Providence, R.I., 1973.


\bibitem[{\bf Ga}]{Ga} L. Garnett,   Foliations, the ergodic theorem and Brownian motion, {\em J. Funct. Anal. }  {\bf 51}  (1983), 285--311.

%\bibitem[{\bf Gr}]{Gr} A. Grygor'yan, Heat kernels on weighted manifolds and applications, \emph{in} The ubiquitous  heat  kernel, \emph{Contemp. Math.}  {\bf 398}  (2006),  93--191.

\bibitem[{\bf Go}]{Go}  M.-S. Goto, The cone topology on a manifold without focal points,  \emph{J. Differential Geom. }  {\bf 14}  (1979), no. 4,   595--598.


\bibitem[\bf Gu] {Gu}Y. Guivarc'h, Sur la loi des grands nombres et le rayon spectral
d'une marche al\'{e}atoire, \emph{Ast\'{e}risque,} {\bf 74} (1980),
47--98.

\bibitem[\bf Gul]{Gul} R. Gulliver,  On the variety of manifolds without conjugate points, \emph{Trans. Amer. Math. Soc.} {\bf 210} (1975), 185--201.

\bibitem[\bf He]{He} S. Helgason, {\em Differential geometry and symmetric spaces. Pure and Applied Mathematics, Vol. XII},  Academic Press, New York-London 1962.

%\bibitem[{\bf HI}]{HI}   E. Heintze and H.-C. Im Hof,  Geometry of horospheres,  {\em  J. Diff. Geom.} {\bf 12} (1977), 481--491.


\bibitem[{\bf Hu}]{Hu}  D. Hurley,
Ergodicity of the geodesic flow on rank one manifolds without focal
points, \emph{Proc. Roy. Irish Acad. Sect. A}   {\bf 86} (1986), no.
1, 19--30.

 \bibitem [\bf K1]{K1}V.-A. Kaimanovich, Brownian motion and harmonic
functions on covering manifolds. An entropic approach, {\em Soviet
Math. Dokl.} {\bf 33} (1986),  812--816.


\bibitem[{\bf Kar}]{Kar}  F.-I.  Karpelevich, The geometry of geodesics and
the eigenfunctions of the Beltrami Laplace operator on symmetric
spaces, {\em Trans. Moscow Math. Soc. 1965},  Amer. Math. Soc,
Providence (1967), 51--199.

\bibitem[{\bf Ka}]{Ka} A. Katok, Four applications of conformal equivalence to geometry and dynamics, {\em Ergod. Th. \& Dynam. Sys.} {\bf 8} (1988), 139--152.


%\bibitem[{\bf K2}]{K2} V.A. Kaimanovich,  Brownian motion on foliations: Entropy, invariant measures, mixing, {\em  Funct. Anal. Appl.} {\bf 22} (1989), 326--328.

\bibitem[{\bf Kn1}]{Kn2} G. Knieper,  On the asymptotic geometry of nonpositively curved manifods, {\em Geom. Func. Anal.}  {\bf 7} (1997), 755--782.


\bibitem[{\bf Kn2}]{Kn} G. Knieper, New results on noncompact harmonic manifolds, {\em Comment.
Math. Helv.} {\bf 87} (2012), 669--703.



\bibitem[{\bf L1}]{L1} F. Ledrappier, Harmonic measures and Bowen-Margulis measures, {\em Israel J. Math.}  {\bf71} (1990), 275--287.
\bibitem[{\bf L2}]{L2} F. Ledrappier,  A heat kernel characterization of asymptotic harmonicity, \emph{ Proc. Amer. Math. Soc.}  {\bf 118}  (1993),  no. 3, 1001--1004.


\bibitem[{\bf L3}]{L3} F. Ledrappier, Applications of dynamics to compact manifolds of negative
curvature, {\it in}  {\sl Proceedings of the ICM Z\"urich 1994},
Birkh\"auser (1995), 1195-1202.


\bibitem[{\bf L4}]{L4} F. Ledrappier, Linear drift and entropy for regular covers, {\em Geom. Func. Anal.} {\bf 20} (2010), 710--725.

\bibitem[{\bf LW}]{LW}  F. Ledrappier and X.-D. Wang,  An integral formula for the volume entropy with applications to rigidity,
\emph{J. Differential Geom. }  {\bf85} (2010), no. 3, 461--477.


\bibitem[{\bf LS}]{LS} T. Lyons and D. Sullivan, Function theory, random paths and covering spaces, {\em J. Differential Geom.} {\bf 19} (1984), 299--323.



%\bibitem[\bf O]{O}  M. A. Olshanectsky,   The Martin boundary for the Laplace-Beltrami operator on a Riemannian symmetric space of nonpositive curvature,  \emph{Uspehi Mat. Nauk} 24 1969 no. 6  {\bf150}, 189--190.



\bibitem[\bf O]{O} J.-J. O'Sullivan,  Riemannian manifolds without focal points, \emph{J. Differential Geom.}  {\bf 11} (1976), no. 3, 321--333.

\bibitem[\bf W]{Wa} J. Watkins, The higher rank rigidity theorem for manifolds with no focal points,  \emph{ http://arxiv.org/abs/1111.5428}.


\bibitem[\bf Z1]{Zi1} A. Zimmer, Asymptotically harmonic manifolds without focal points,   \emph{http://arxiv.org/abs/1109.2481}.
\bibitem[\bf Z2]{Zi2} A. Zimmer, Compact asymptotically harmonic manifolds,
\emph{http://arxiv.org/abs/1205.2271}.


\end{thebibliography}
\end{document}